\newtheorem{theorem}{Theorem}[section]
\newtheorem{proposition}[theorem]{Proposition}
\newtheorem{lemma}[theorem]{Lemma}
\newtheorem{remark}[theorem]{Remark}
\newtheorem{examples}[theorem]{Examples}
\newtheorem{claim}[theorem]{Claim}
\numberwithin{equation}{section}
\theoremstyle{definition}
\title[]
 {Unique continuation properties for one dimensional higher order Schr\"{o}dinger equations}
\author{Tianxiao Huang, Shanlin Huang\textsuperscript{*} and Quan Zheng}
\address{Tianxiao Huang, School of Mathematics (Zhuhai), Sun Yat-sen University, Zhuhai, Guangdong 519082, China}
\email{htx5@mail.sysu.edu.cn}
\address{Shanlin Huang (corresponding author), School of Mathematics and Statistics, Hubei Key Laboratory of Engineering Modeling and Scientific Computing, Huazhong University of Science and Technology, Wuhan 430074, Hubei, PR China}
\email{shanlin\_huang@hust.edu.cn}
\address{Quan Zheng, School of Mathematics and Statistics, Huazhong University of Science and Technology, Wuhan 430074, Hubei, PR China }
\email{qzheng@hust.edu.cn}
\subjclass[2010]{35G05, 35A02, 35J10, 35B60}
\keywords{Unique continuation, higher order Schr\"{o}dinger equation, Carleman estimate, restriction estimate}
\begin{document}

\begin{abstract}
We study two types of unique continuation properties for the higher order Schr\"{o}dinger equation with potential
$$
i\partial_tu=(-\Delta_x)^mu+V(t,x)u,\quad(t,x)\in\mathbb{R}^{1+n},\,2\leq m\in\mathbb{N}_+.
$$
The first one says if $u$ has certain exponential decay at two times, then $u\equiv0$, and this result is sharp by constructing critical non-trivial solutions. The second one says if $u\equiv0$ in an arbitrary half-space of $\mathbb{R}^{1+n}$, then $u\equiv0$ identically. The uniqueness theorems are given when $n=1$, but we also prove partial results when $n\in\mathbb{N}_+$ for their own interests. Possibility or obstacles to proving these unique continuation properties in higher spatial dimensions are also discussed.
\end{abstract}

\maketitle



\section{Introduction}

Consider the higher order Schr\"{o}dinger equation with potential
\begin{equation}\label{eq0.1}
i\partial_tu=(-\Delta_x)^mu+V(t,x)u,\quad(t,x)\in\mathbb{R}^{1+n},\,2\leq m\in\mathbb{N}_+,
\end{equation}
where $\Delta_x=\partial_{x_1}^2+\cdots+\partial_{x_n}^2$ is the spatial Laplacian. In the case $m=2$, this type of equation was originated in a non-linear setting introduced by Karpman \cite{Karpman}, for considering the effect of higher order dispersion term on the self-focusing. The general linear analysis of \eqref{eq0.1} has been more focused on, for examples, estimates for the fundamental solution and its regularity for the free case \cite{Mi80,Mi81,KPV-91}, $L^p-L^q$ and $L^p$ estimates \cite{ZYF}, wave operators and scattering theory \cite{Hor2,DZF}, and the dispersive estimates \cite{FSY,FSWY} recently.

The aim of the current paper is to inspire more unique continuation properties for equation \eqref{eq0.1}, mainly by studying the Carleman estimates. The two types of problems we consider are much more known in the second order case $m=1$, i.e. the Schr\"{o}dinger equation, for their interesting connections to two important topics in harmonic analysis: the Hardy's uncertainty principle and Fourier restriction estimate. The unique continuation theorems (Theorem \ref{thm1.1} and Theorem \ref{thm1.2}) we obtain are limited in one spatial dimension, but partial results (Examples \ref{ex12}, Proposition \ref{prop2.4} and Lemma \ref{lm2.1}) and discussions in higher dimensions are also given for expected further studies.

\subsection{$L^2$ quantitative unique continuation from two times}\ 

Our first main result concerns time-independent potentials. Consider
\begin{equation}\label{eq1.1}
i\partial_tu=D_x^{2m}u+V(x)u,\quad(t,x)\in\mathbb{R}^2,\,m\geq2,
\end{equation}
where $D_x=i^{-1}\partial_x$.
\begin{theorem}\label{thm1.1}
Suppose $V\in L^\infty(\mathbb{R})$ is real-valued. Then there exists $\tilde{\gamma}>0$ depending only on $m$, such that if $u\in C(\mathbb{R};L^2(\mathbb{R}))$ solves \eqref{eq1.1} and
\begin{equation}\label{eq1.3}
\begin{cases}
e^{\gamma|x|^\frac{2m}{2m-1}}u(T_1,x),\,e^{\gamma|x|^\frac{2m}{2m-1}}u(T_2,x)\in L_x^2(\mathbb{R}),\\
\gamma>\frac{\tilde{\gamma}}{(T_2-T_1)^\frac{1}{2m-1}},
\end{cases}
\end{equation}
for some $T_1<T_2$, then $u\equiv0$.
\end{theorem}

We first remark that no further regularity assumption is needed than $u$ being a continuous solution, i.e. $u\in C(\mathbb{R};L^2(\mathbb{R}))$ is only assumed to satisfy $u(t)=e^{-it(D_x^{2m}+V)}u(0)$. Moreover, Theorem \ref{thm1.1} is sharp in regard to the decay exponent $\frac{2m}{2m-1}$, by the following examples where non-trivial solutions having such exponential decay can be constructed in all spatial dimensions.

\begin{examples}\label{ex12}
For any $n\in\mathbb{N}_+$ and real-valued $V\in L^\infty(\mathbb{R}^n)$, there exists a non-trivial $u\in C^\infty(\mathbb{R};H^{2m}(\mathbb{R}^n))$ solving
\begin{equation}\label{eq14}
i\partial_tu=(-\Delta_x)^mu+V(x)u,\quad(t,x)\in\mathbb{R}^{1+n},
\end{equation}
such that
\begin{equation}\label{1.5}
\left\|e^{h(t)|x|^\frac{2m}{2m-1}}u(t,x)\right\|_{L_x^2(\mathbb{R}^n)}\leq C(1+|t|)^n,\quad t\in\mathbb{R},
\end{equation}
for some function $h$ strictly decreasing in $|t|$, satisfying
\begin{equation}\label{e16}
h(t)\leq\frac{C}{(1+|t|)^\frac{2m}{2m-1}}.
\end{equation}
Moreover if $V\equiv0$, the non-trivial $u$ can be found analytic in $\mathbb{R}^{1+n}$, satisfying
\begin{equation}\label{1.6}
|u(t,x)|\leq C\exp\left\{-\frac{c|x|^\frac{2m}{2m-1}}{(1+|t|)^\frac{2m}{2m-1}}\right\},\quad(t,x)\in\mathbb{R}^{1+n},
\end{equation}
for some $C,\,c>0$. 
\end{examples}

Consequently, if Theorem \ref{thm1.1} holds for \eqref{eq14} concerning decay $e^{-c|x|^{2m/(2m-1)}}$ in higher dimensions, it should also be sharp, but this is unknown even in the case $V\equiv0$. We note that constant $(1+|t|)^{-\frac{2m}{2m-1}}$ in \eqref{e16} and \eqref{1.6} is weaker than the one in \eqref{eq1.3}, (the one in \eqref{eq1.3} can be checked by scaling,) and this is due to the use of higher order heat kernel for the complex time variable in the constructions, see subsection \ref{sb2.4}.

Unique continuation from two times for dispersive equations seems to originate in control theory since the works \cite{Zhang92,Zhang97} of Zhang, and it has attracted plenty of attention in the last three decades. For nonlinear equations, a typical consideration is to assume spatial support constrains for the difference of two solutions at two times, and conclude that difference is identically zero for the time in between. See \cite{KPV-02,KPV-03} for such study on the $k$-generalized KdV equation, and \cite{KPV-03CPAM,IK,IK06} on the nonlinear Schr\"{o}dinger equations. For higher order models, such problem was considered in \cite{DZ} for non-linear one dimensional higher order Schr\"{o}dinger equation, and in \cite{KPV-03'} for higher order KdV type equation \eqref{e110} that we will turn back to later. We also mention that recently, the works \cite{WWZ,HS} for linear Schr\"{o}dinger equations and \cite{LW} for the KdV equation also considered the sizes of supports at two times in the view of observability.

Other than constraining the support, Escauriaza et al. \cite{EKPV-CPDE06} first realized for Schr\"{o}dinger equations, that assuming certain quantitative spatial decay of the solution at two times is enough to ensure uniqueness. Such idea was motivated by an interesting relation between the free Schr\"{o}dinger equation and the Hardy's uncertainty principle in harmonic analysis. The Hardy's uncertainty principle says: \textit{if $f(x)=O(e^{-\beta|x|^2})$, $\hat{f}(\xi)=O(e^{-\alpha|\xi|^2})$ and $\alpha\beta>\frac14$, then $f\equiv0$; if $\alpha\beta=\frac14$, then $f$ is a constant multiple of $e^{-\beta|x|^2}$.} Here
\begin{equation*}
\hat{f}(\xi)=(\mathscr{F}f)(\xi)=\int_{\mathbb{R}^n}e^{-ix\cdot\xi}f(x)dx.
\end{equation*}
In \cite{EKPV-CPDE06}, this principle was pointed out in fact equivalent to a unique continuation property for the free Schr\"{o}dinger equation
\begin{equation}\label{1.8}
i\partial_tu=-\Delta_xu,\quad(t,x)\in\mathbb{R}^{1+n},
\end{equation}
which says: \textit{if $u(0,x)=O(e^{-\beta|x|^2})$, $u(T,x)=O(e^{-\alpha|x|^2})$ and $\alpha\beta>\frac{1}{16T^2}$, then $u\equiv0$; if $\alpha\beta=\frac{1}{16T^2}$, then $u(0,x)$ is a constant times $e^{-(\beta+\frac{i}{4T})|x|^2}$.} Such connection is given by the Fourier expression of the solution
\begin{equation}\label{eq1.4.1}
u(t,x)=(4\pi it)^{-\frac n2}e^\frac{i|x|^2}{4t}\mathscr{F}\left(e^\frac{i|\cdot|^2}{4t}u(0,\cdot)\right)\left(\frac{x}{2t}\right),
\end{equation}
which is a computational result by the fundamental solution $(4\pi it)^{-\frac n2}e^\frac{i|x-y|^2}{4t}$ of \eqref{1.8}.

In a series of works \cite{EKPV-CPDE06,EKPV-JEMS08,EKPV-JLMS11,EKPV-DUKE10,EKPV-CMP11}, Escauriaza et al. extended such type of results to the case of variable coefficients, which applied to the difference of two solutions of certain class of non-linear Schr\"{o}dinger equations. See \cite{EKPV-BAMS12} for a more complete overview on this topic. It should be noticed that the formula \eqref{eq1.4.1} is hardly useful in the case of variable coefficients, and Escauriaza et al. have developed an \textit{equation-based scheme} to prove quantitative uniqueness for Schr\"{o}dinger equations, which combines upper bound estimates in appropriate weighted energy spaces, and Carleman estimates which match the upper bounds quantitatively.

It is then natural to ask whether such scheme can give quantitative unique continuation properties from two times for more general dispersive equations, while to find perfect matching uncertainty principles seems rarely possible. In \cite{EKPV-JFA07}, Escauriaza et al. considered the $k$-generalized KdV equations
\begin{equation*}
\partial_tu+\partial_x^3u+u^k\partial_xu=0,\quad(t,x)\in[0,1]\times\mathbb{R},\,k\in\mathbb{N}_+,
\end{equation*}
and obtained that if the difference of two solutions $\omega=u_1-u_2$ and its derivatives satisfy spatial decay at two times
\begin{equation*}
\omega(0,x),\,\omega(1,x)\in H^1(e^{ax_+^{3/2}}dx),\quad a=a(u_1,u_2,k),
\end{equation*}
then $u_1\equiv u_2$. Here $x_+=\max\{0,x\}$, and it was pointed out that the decay $e^{-cx_+^{3/2}}$, which is the same spatial decay of the fundamental solution for the linear problem appearing as a scaled Airy function, is optimal by constructing a non-trivial solution. We refer to the recent works \cite{BJM,CFL} for Zakharov-Kuznetsov equations as higher dimensional analogues.

Such problem for higher order dispersive equations seems computationally difficult in their own nature. An odd order model closely related to the higher order Schr\"{o}dinger equations \eqref{eq0.1} or \eqref{eq1.1} is the higher order KdV type equation
\begin{equation}\label{e110}
\partial_tu+(-1)^{k+1}\partial_x^{2k+1}u+P(u,\partial_xu,\dots,\partial_x^pu)=0,\quad(t,x)\in\mathbb{R}^2,\,k\geq2,
\end{equation}
where $P$ is a polynomial. In \cite{Is}, Isaza proved that (see Dawson \cite{Da} for an earlier study for $k=2$,) if the difference of two solutions $\omega=u_1-u_2$ satisfies
\begin{equation*}\label{e1.11}
\omega(0,x),\,\omega(1,x)\in
\begin{cases}
L^2(e^{x_+^{4/3+\epsilon}}dx),&\text{if $p\leq2k-1$ and $\epsilon>0$},\\
L^2(e^{ax_+^{(2k+1)/2k}}dx),\quad&\text{if $p\leq k$ and $a=a(k)$},
\end{cases}
\end{equation*}
then $u_1\equiv u_2$. We remark that in the less non-linear case $p\leq k$, the decay $e^{-cx_+^{(2k+1)/2k}}$ was pointed out to be the same spatial decay of the fundamental solution for the linear problem.

To motivate a similar form of quantitative uniqueness for the higher order Schr\"{o}dinger equation \eqref{eq0.1}, we in the first place however, lack of a formula like \eqref{eq1.4.1} to make a connection between the solution of the free case
\begin{equation}\label{e1.12}
\begin{cases}
i\partial_tu=(-\Delta_x)^mu,\quad(t,x)\in\mathbb{R}^{1+n},\,m\geq2,\\
u(0,x)=u_0,
\end{cases}
\end{equation}
and the Fourier transform of the initial data $u_0$, thus any uncertainty principle for Fourier transform seems not immediately inspiring. From another perspective of fundamental solution as just mentioned for the KdV and higher order KdV cases, one may also notice that the exponential decay condition in Theorem \ref{thm1.1} is not a reflection of \eqref{e1.12}'s fundamental solution, since which only obeys polynomial decay (e.g. \cite[Examples 3.3]{HHZ})
\begin{equation*}
\left|\mathscr{F}^{-1}(e^{-it|\cdot|^{2m}})(t,x-y)\right|\leq C|t|^{-\frac{n}{2m}}(1+|t|^{-\frac{1}{2m}}|x-y|)^{-\frac{n(m-1)}{2m-1}},\quad t\neq0,\,x,\,y\in\mathbb{R}^n.
\end{equation*}

In fact, we think of Theorem \ref{thm1.1} as a "limit" of its closely related parabolic version: \textit{the quantitative backward unique continuation}. For any $\epsilon>0$, consider Cauchy problem
\begin{equation}\label{e1.13}
\begin{cases}
\partial_tu=-(\epsilon+i)(-\Delta_x)^mu,\quad(t,x)\in[0,+\infty)\times\mathbb{R}^n,\,m\geq2,\\
u(0,x)=u_0.
\end{cases}
\end{equation}
Also recall an uncertainty principle originally due to Morgan and Beurling (see Escauriaza et al. \cite{EKPV-JLMS11}): \textit{if $f\in L^1(\mathbb{R})$ or $f\in L^2(\mathbb{R}^n)$ for $n\geq2$, $1<p<2$, $1/p+1/q=1$, and}
\begin{equation}\label{e1.14}
\left\|e^{\frac{\alpha^p|x|^p}{p}}f(x)\right\|_{L^1}+\left\|e^{\frac{\beta^q|\xi|^q}{q}}\hat{f}(\xi)\right\|_{L^1}<\infty,\quad\alpha\beta\geq1,
\end{equation}
\textit{then $f\equiv0$.} Now since the solution $u$ to \eqref{e1.13} has Fourier transform
\begin{equation}\label{1.17}
\hat{u}(t,\xi)=e^{-(\epsilon+i)t|\xi|^{2m}}\hat{u_0}(\xi),
\end{equation}
if we put $f(x)=u(T,x)$, $p=\frac{2m}{2m-1}$ and $q=2m$ in \eqref{e1.14}, the following is immediate: \textit{if $u_0\in L^1(\mathbb{R})$ or $u_0\in L^2(\mathbb{R}^n)$ for $n\geq2$, and}
\begin{equation*}
e^{c|x|^\frac{2m}{2m-1}}u(T,x)\in L_x^1,\quad c>\frac{2m-1}{(2m)^\frac{2m}{2m-1}(\epsilon T)^\frac{1}{2m-1}},
\end{equation*}
\textit{then $u_0\equiv0$.}

The above argument fails for the equation \eqref{e1.12}, because $\hat{u}(t,\xi)=e^{-it|\xi|^{2m}}\hat{u_0}(\xi)$ does not have a priori decay like \eqref{1.17}. However, if in addition the initial data $u_0$ decays like $e^{-c|x|^{2m/(2m-1)}}$, the situation may be compensated. Such possibility is encouraged by the following property of Fourier transform. Let $p\geq1$ and $Z_p^p$ be the class of analytic functions $\varphi$ in $\mathbb{C}^n$ satisfying 
\begin{equation*}
|\varphi(z)|\leq Ce^{\sum_{j=1}^{n}\epsilon_jC_j|z_j|^p},\quad z\in\mathbb{C}^n,
\end{equation*}
where $C,\,C_j>0$, $\epsilon_j=-1$ when $z_j\in\mathbb{R}$, and $\epsilon_j=1$ when $z_j\notin\mathbb{R}$. In Gel'fand and \v{S}ilov \cite{GS} it was proved that $\mathscr{F}Z_p^p=Z_q^q$ where $1/p+1/q=1$. Now consider \eqref{e1.12} with $u_0\in Z_{2m/(2m-1)}^{2m/(2m-1)}$, we have $\hat{u_0}\in Z_{2m}^{2m}$ and in particular,
\begin{equation*}\label{A}
|\hat{u}(t,\xi)|=|e^{-it|\xi|^{2m}}\hat{u_0}(\xi)|=|\hat{u_0}(\xi)|\leq C'e^{-C''|\xi|^{2m}}.
\end{equation*}

This, and the backward uniqueness argument suggest that it is reasonable to work with the decay $e^{-c|x|^{2m/(2m-1)}}$ given at two times. In fact, such decay persists for all times in between.

\begin{proposition}\label{prop2.4}
Suppose $n\in\mathbb{N}_+$, $V\in L^\infty(\mathbb{R}^n)$ is real-valued, and $u\in C(\mathbb{R};L^2(\mathbb{R}^n))$ solves \eqref{eq14}. If there exists $\gamma>0$ such that
\begin{equation*}
e^{\gamma|x|^\frac{2m}{2m-1}}u(0,x),\,e^{\gamma|x|^\frac{2m}{2m-1}}u(T,x)\in L_x^2(\mathbb{R}^n),
\end{equation*}
for some $T>0$, then for $t\in[0,T]$, we have
	\begin{equation}\label{eq2.15}
	\left\|e^{\gamma|x|^\frac{2m}{2m-1}}u(t,x)\right\|_{L_x^2}\leq Ce^{\frac{t(T-t)}{4}||V||^2_{L^\infty}}\left\|e^{\gamma|x|^\frac{2m}{2m-1}}u(0,x)\right\|_{L_x^2}^\frac{T-t}{T}\left\|e^{\gamma|x|^\frac{2m}{2m-1}}u(T,x)\right\|_{L_x^2}^\frac{t}{T}.
	\end{equation}
\end{proposition}

Considering logarithmic convexity of weighted energy was initiated in Escauriaza et al. \cite{EKPV-JEMS08} for Schr\"{o}dinger equations with Gaussian weight, and variants were obtained in \cite{EKPV-MRL08}. The idea there was to bootstrap the formal calculation for the second order time derivative of, for example, $\log\|e^{a(t)|x|^2}u(t,x)\|_{L_x^2}^2$ with some well chosen $a(t)$. What is somehow different in proving Proposition \ref{prop2.4} is that, instead of working with the weight $e^{\gamma|x|^{2m/(2m-1)}}$ directly, we shall first consider proving with the linear exponential weight $e^{\lambda\cdot x}$ for all $\lambda\in\mathbb{R}^n$, and then lift the estimate by a subordination inequality proved in \cite{EKPV-JLMS11}
\begin{equation}\label{eq2.27}
c_{n,p}^{-1}e^{|x|^p/p}\leq\int_{\mathbb{R}^n}e^{\lambda\cdot x-|\lambda|^q/q}|\lambda|^\frac{n(q-2)}{2}d\lambda\leq c_{n,p}e^{|x|^p/p},\quad x\in\mathbb{R}^n,
\end{equation}
where $p\in(1,2]$ and $1/p+1/q=1$. The point of such approach is to formally keep the conjugated operator $e^{\lambda\cdot x}(-\Delta_x)^me^{-\lambda\cdot x}$ having constant coefficients, so that the energy method is more applicable to $\log\|e^{\lambda\cdot x}u(t,x)\|_{L_x^2}^2$. This formal argument seems irrelevant to the exponent $\frac{2m}{2m-1}$, however, as indicated in Examples \ref{ex12}, the bootstrapping requires knowledge from the higher order analytic semigroup $\{e^{-z((-\Delta)^m+V)}\}_{\mathrm{Re}\,z>0}$, the kernel of which has spatial decay $e^{-c|x|^{2m/(2m-1)}}$. We remark that the solution $u$ is neither assumed to be more regular than continuous in $L_x^2$, nor in advance to have any spatial decay when $t\in(0,T)$, but working with the regularized solution $e^{-(\epsilon+it)((-\Delta)^m+V)}u(0)$ will gain both.

Following the upper/lower-scheme for proving uniqueness, our final step needs a quantitative Carleman estimate that matches Proposition \ref{prop2.4} regarding the exponent $\frac{2m}{2m-1}$. But we are only able to do so in spatial dimension one.

\begin{proposition}\label{lm2.6}
	Suppose $\phi\in C^2([0,1])$ is real-valued, and define
	\begin{equation}\label{e118}
	\mbox{$Q(t,x)=2\gamma R^\frac{2m}{2m-1}\left(\frac xR+\phi(t)\right)^2,\quad\gamma,\,R>0,~(t,x)\in[0,1]\times\mathbb{R}.$}
	\end{equation}
	 Consider all $u\in C_c^\infty((0,1)\times\mathbb{R})$ such that $0<d_1\leq|\frac xR+\phi(t)|\leq d_2$ holds in $\mathrm{supp}\,u$ for some fixed $d_1<d_2$. Then there exist $\gamma_0,R_0,C>0$ such that
	\begin{equation}\label{eq2.30}
	\iint e^Q\left|D_tu+D_x^{2m}u\right|^2dxdt\geq C\gamma^{4m-1}R^\frac{2m}{2m-1}\iint e^Q|u|^2dxdt,\quad\gamma\geq\gamma_0,\,R\geq R_0.
	\end{equation}
	Here $D_t=i^{-1}\partial_t$.
\end{proposition}

In the proof of Theorem \ref{thm1.1}, exponent $\frac{2m}{2m-1}$ in \eqref{e118} plays the role of matching the weight $e^{\gamma|x|^{2m/(2m-1)}}$ in Proposition \ref{prop2.4}. We mention without proof that when $n\geq2$, if $\tilde{Q}(t,x)=2\gamma R^\frac{4m}{3m-1}|\frac xR+\phi(t)e_1|^2$ where $e_1$ the first unit vector in $\mathbb{R}^n$, a similar techniques shows the lower bound
\begin{equation*}
\int_{[0,1]\times\mathbb{R}}e^{\tilde{Q}}\left|D_tu+(-\Delta_x)^mu\right|^2dxdt\geq C\gamma^{3m}R^\frac{4m}{3m-1}\int_{[0,1]\times\mathbb{R}}e^{\tilde{Q}}|u|^2dxdt.
\end{equation*}
If a unique continuation result for \eqref{eq14} is expected with the exponent $\frac{4m}{3m-1}$ involved, since $\frac{4m}{3m-1}>\frac{2m}{2m-1}$ when $m>1$, we more or less need an analogue of Proposition \ref{prop2.4} on the faster decay $e^{-c|x|^{4m/(3m-1)}}$, which is however not evident.

\subsection{$L^p$ global unique continuation}\ 

Our second main result concerns unbounded potentials over the whole space-time. Consider
\begin{equation}\label{e121}
i\partial_tu=D_x^{2m}u+V(t,x)u,\quad(t,x)\in\mathbb{R}^2,\,m\geq2.
\end{equation}

\begin{theorem}\label{thm1.2}
Suppose $D$ is an arbitrary half-plane in $\mathbb{R}^2$, $V\in L^\frac{2m+1}{2m}(\mathbb{R}^2)$, and $u\in W^{\frac{4m+2}{4m+1}}(\mathbb{R}^2)$ where
\begin{equation*}
W^\frac{4m+2}{4m+1}(\mathbb{R}^2)=\left\{u\in L^\frac{4m+2}{4m+1 }(\mathbb{R}^2);~\partial_tu,\,D_x^{2m}u\in L^\frac{4m+2}{4m+1}(\mathbb{R}^2)\right\}.
\end{equation*}
If $u$ is a solution to \eqref{e121} and $u\equiv0$ in $D$, then $u\equiv0$ in $\mathbb{R}^2$.
\end{theorem}

\begin{remark}
If $V$ is analytic and $\partial D$ is non-characteristic to $i\partial_t-D_x^{2m}$, the conclusion is trivial by the well-known Holmgren's uniqueness theorem. If $V\equiv0$, and $\partial D=\{(t,x);~t=0,\,x\in\mathbb{R}\}$ which is characteristic, since one is able to construct by H\"{o}rmander \cite[Theorem 8.6.7]{Hor1} a non-trivial $C^{\infty}$ solution $u$ with $\mathrm{supp}\,u=D$, our result implies $u\notin W^{\frac{4m+2}{4m+1}}(\mathbb{R}^2)$. Thus for a global unique continuation property like Theorem \ref{thm1.2} to hold, certain regularity condition must be needed.
\end{remark}

Theorem \ref{thm1.2} was first proved in Kenig and Sogge \cite{KS} for the Schr\"{o}dinger equation
\begin{equation}\label{e122}
i\partial_tu=-\Delta_xu+V(t,x)u,\quad(t,x)\in\mathbb{R}^{1+n},\,n\in\mathbb{N}_+,
\end{equation}
where $V\in L^\frac{n+2}{2}(\mathbb{R}^{1+n})$ and $D$ is an arbitrary half-space in $\mathbb{R}^{1+n}$. Such unique continuation property was shown by establishing the $L^p$ Carleman estimate
\begin{equation}\label{e123}
\left\|e^{\lambda\langle(t,x),v\rangle}u\right\|_{L^\frac{2n+4}{n}(\mathbb{R}^{1+n})}\leq C\left\|e^{\lambda\langle(t,x), v\rangle}(i\partial_t+\Delta_x)u\right\|_{L^\frac{2n+4}{n+4}(\mathbb{R}^{1+n})},\quad u\in C_c^\infty(\mathbb{R}^{1+n}),
\end{equation}
where $C>0$ is independent of $(\lambda,v)\in\mathbb{R}\times\mathbb{S}^{n}$, and $\langle\cdot,\cdot\rangle$ is the inner product in $\mathbb{R}^{1+n}$. The pattern for proving uniqueness by \eqref{e123} is now quite known, and it is no surprise that an $L^p$ Carleman estimate like \eqref{e123} allows one to consider unbounded potentials with certain integrability in the view of H\"{o}lder's inequality. On extending results for equation \eqref{e122} or the associated Carleman estimate \eqref{e123}, Lee and Seo \cite{LS} as well as Seo \cite{Seo12} considered mixed norms in $L_t^sL_x^p$, and it was further extended in Seo \cite{Seo11} to Wiener amalgam norms. Clearly estimates like \eqref{e123} can not be applied to the equation with a time-independent potential $V(x)$, however in the works \cite{Seo14,Seo}, Seo proved certain $L^2$ type Carleman estimates where $|V(x)|$ plays a role of weight in the norms, and global unique continuation results were still obtained. 

We also remark that the special weight $e^{\lambda x_1}$ in the Carleman estimate was considered in many works to show unique continuation from two times when spatial supports are constrained, as previously mentioned in Kenig et al. \cite{KPV-02,KPV-03,KPV-03CPAM} for the KdV and Schr\"{o}dinger equations, \cite{KPV-03'} for higher order KdV type equation \eqref{e110}, and  Duan and Zheng \cite{DZ} for non-linear one dimensional Schr\"{o}dinger equation. In particular, the following Carleman estimate was proved in \cite[Proposition 3.1]{DZ}
\begin{equation}\label{1.24}
\left\|e^{\lambda x}u\right\|_{L^{4m+2}(\mathbb{R}^{2})}\leq C\left\|e^{\lambda x}(i\partial_t-D_x^{2m})u\right\|_{L^\frac{4m+2}{4m+1}(\mathbb{R}^{2})},\quad u\in C_c^\infty(\mathbb{R}^{2}),
\end{equation}
where $C>0$ is independent of $\lambda\in\mathbb{R}$, and we will see later that this is a special case of Lemma \ref{lem3.1} below.

For higher order Schr\"{o}dinger equations like \eqref{eq0.1}, the general topic of \textit{unique continuation across hypersurface} is more understood in the case of bounded coefficients, but actual relevant results are quite few, and the higher dimensional progress is indeed recent. An earlier effort in this direction goes back to Isakov \cite{Isakov}, a conclusion there in one spatial dimension is the local uniqueness (and stability) of the Cauchy problem for 
\begin{equation*}
ia(t,x)\partial_tu+D_x^ku+\sum_{j=1}^{k}b_j(t,x)D_x^ju=f,\quad(t,x)\in(0,T)\times(0,1),\,k\geq2,
\end{equation*}
where the data is given on the spatial boundary $(0,T)\times\{0\}$. Note that this result also applies to higher order KdV type equation \eqref{e110} earlier studied by Saut and Scheurer \cite{SS}. In higher dimensions, the first author recently considered the higher order Schr\"{o}dinger inequality in \cite{H}
\begin{equation}\label{e124}
\left|i\partial_tu-(-\Delta_x)^mu\right|\leq C\begin{cases}
\sum_{|\alpha|\leq[\frac{3m}{2}]}|\partial_x^\alpha u|,&m\geq3,\\
\sum_{|\alpha|\leq2}|\partial_x^\alpha u|+\sum_{|\alpha|=2}|\partial_x^{\alpha+e_1}u|,\quad&m=2,
\end{cases}
\end{equation}
and proved unique continuation across the hyperplane $\{(t,x)\in\mathbb{R}^{1+n};~x_1=0,\,|t|<1\}$. Such property is only global in the $x'=(x_2,\dots,x_n)$ variables, and a completely local result for \eqref{e124} was also proved in \cite{H} concerning saddle-shape hypersurfaces. The just mentioned results of \cite{Isakov,H} were all based on $L^2$ Carleman estimates. 

To consider unbounded potentials and motivate an $L^p$ Carleman estimate for the higher order Schr\"{o}dinger equation \eqref{eq0.1}, we now turn back to the second order case for a review in more details. First note that in Kenig and Sogge \cite{KS}, the idea of proving an $L^p$ global unique continuation property for the Schr\"{o}dinger equation had an implication from the Fourier restriction theory. To be more specific, the special case $\lambda=0$ of $L^p$ Carleman estimate \eqref{e123} is actually a consequence of the Fourier restriction estimate originally due to Strichartz \cite{Stri}
\begin{equation*}
\left\|\int_{\mathbb{R}^n}e^{i\langle(t,x),(|\xi|^2,\xi)\rangle}\hat{f}(|\xi|^2,\xi)d\xi\right\|_{L^\frac{2n+4}{n}(\mathbb{R}^{1+n})}\leq C\|f\|_{L^\frac{2n+4}{n+4}(\mathbb{R}^{1+n})}.
\end{equation*}
Here the Fourier transform is taken in all variables. The proof of \eqref{e123} relies on showing the "uniform Strichartz estimate"
\begin{equation}\label{e125}
\|g\|_{L^\frac{2n+4}{n}(\mathbb{R}^{1+n})}\leq C\|(i\partial_t+\Delta_x+L(D_x))g\|_{L^\frac{2n+4}{n+4}(\mathbb{R}^{1+n})},\quad g\in C_c^\infty(\mathbb{R}^{1+n}),
\end{equation}
where $C>0$ is uniform for all first order differential operators $L(D_x)$ with constant coefficients. This is of course a stronger claim than Carleman estimate \eqref{e123}, for we can write $e^{\lambda\langle(t,x),v\rangle}(i\partial_t+\Delta_x)e^{-\lambda\langle(t,x),v\rangle}=i\partial_t+\Delta_x+L_{\lambda,v}(D_x)$. To show \eqref{e125}, a special case
\begin{equation}\label{e126}
\|g\|_{L^\frac{2n+4}{n}(\mathbb{R}^{1+n})}\leq C\|(i\partial_t+\Delta_x+z)g\|_{L^\frac{2n+4}{n+4}(\mathbb{R}^{1+n})},\quad g\in C_c^\infty(\mathbb{R}^{1+n}),\,z\in\mathbb{C},
\end{equation}
namely the "uniform resolvent estimate" is crucial, because a frequency localization argument in the direction of $1$-homogeneous part of $L(D_x)$ reduces \eqref{e125} to \eqref{e126}. The above heuristics was earlier considered in H\"{o}rmander \cite{Hor83} for elliptic equations and in Kenig et al. \cite{KRS} for hyperbolic equations.

If we follow the same heuristics for the higher order Schr\"{o}dinger equation \eqref{eq0.1}
\begin{equation*}
i\partial_tu=(-\Delta_x)^mu+V(t,x)u,\quad(t,x)\in\mathbb{R}^{1+n},
\end{equation*}
since the relevant restriction property is a known consequence of oscillatory integral estimate (see Stein \cite[p. 369]{St}):
\begin{equation}\label{e127}
\left\|\int_{\mathbb{R}^n}e^{i\langle(t,x),(|\xi|^{2m},\xi)\rangle}\hat{f}(|\xi|^{2m},\xi)d\xi\right\|_{L^\frac{4m+2n}{n}(\mathbb{R}^{1+n})}\leq C\|f\|_{L^\frac{4m+2n}{4m+n}(\mathbb{R}^{1+n})},
\end{equation}
a corresponding Carleman estimate for $i\partial_t-(-\Delta_x)^m$ to consider is on the horizon. But so far we are only able to prove the following in one spatial dimension.

\begin{lemma}\label{lem3.1}
It follows that
\begin{align}\label{equ3.1}
\left\|e^{\lambda\langle(t,x),v\rangle}u\right\|_{L^{4m+2}(\mathbb{R}^{2})}\leq C\left\|e^{\lambda\langle(t,x), v\rangle}(i\partial_t-D_x^{2m})u\right\|_{L^\frac{4m+2}{4m+1}(\mathbb{R}^{2})},\quad u\in C_c^\infty(\mathbb{R}^{2}),
\end{align}
where $C>0$ is independent of $(\lambda,v)\in\mathbb{R}\times\mathbb{S}^{1}$.
\end{lemma}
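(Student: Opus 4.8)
This is a uniform weighted resolvent estimate for $P=i\partial_t-D_x^{2m}$, and I would prove it along the lines of the $L^p$ Carleman estimates of Kenig--Ruiz--Sogge and Kenig--Sogge (indeed for $m=1$ it is essentially their one-dimensional Schr\"odinger estimate). Write $z=(t,x)$, so the symbol of $P$ is $P(\tau,\xi)=-\tau-\xi^{2m}$. Replacing $u$ by $e^{\lambda\langle z,v\rangle}u$ shows that \eqref{equ3.1} is equivalent to the bound $\|u\|_{L^{p}(\mathbb{R}^2)}\le C\|P_{\lambda,v}u\|_{L^{p'}(\mathbb{R}^2)}$, with $p=4m+2$ and $p'=\tfrac{4m+2}{4m+1}$, uniform in $\lambda\in\mathbb{R}$ and $v\in\mathbb{S}^1$, where $P_{\lambda,v}$ is the Fourier multiplier with symbol $P(\tau-i\lambda v_0,\xi-i\lambda v_1)=-\tau+i\lambda v_0-(\xi-i\lambda v_1)^{2m}$; by density it suffices to bound, uniformly, the operator $T_{\lambda,v}$ with symbol $1/P_{\lambda,v}$ from $L^{p'}$ to $L^{p}$. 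A short computation shows $p=4m+2$ is the unique exponent making \eqref{equ3.1} invariant under the parabolic scaling $u(t,x)\mapsto u(\nu^{2m}t,\nu x)$ (which transforms $P$ by the scalar $\nu^{2m}$), and under this scaling $(\lambda,v)$ is sent to a pair obeying the same estimate and with first entry $\lambda\,\bigl|(v_0\nu^{-2m},v_1\nu^{-1})\bigr|$, which runs over all of $(0,\infty)$. Hence, after also replacing $v$ by $-v$ if needed, it is enough to treat $\lambda=0$ and $\lambda=1$, the latter for every $v\in\mathbb{S}^1$.

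\emph{The case $\lambda=0$ (Strichartz).} One needs $\|u\|_{L^{4m+2}_{t,x}}\le C\|Pu\|_{L^{(4m+2)/(4m+1)}_{t,x}}$. Starting from the one-dimensional dispersive bound $\|e^{-itD_x^{2m}}f\|_{L^\infty_x}\le C|t|^{-1/(2m)}\|f\|_{L^1_x}$ (van der Corput's lemma for $\int e^{i(x\xi-t\xi^{2m})}\,d\xi$, plus scaling), interpolation with the $L^2_x$ isometry gives $\|e^{-itD_x^{2m}}\|_{L^{p'}_x\to L^{p}_x}\le C|t|^{-1/(2m+1)}$, since $\tfrac1{2m+1}=\tfrac1{2m}(1-\tfrac2p)$ for $p=4m+2$. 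Writing $u$ solving $Pu=F$ via Duhamel's formula and applying the one-dimensional Hardy--Littlewood--Sobolev inequality in $t$ with exponent $\tfrac1{2m+1}$ --- the admissible value, as $1+\tfrac1p=\tfrac1{2m+1}+\tfrac1{p'}$ exactly for $p=4m+2$ --- gives the estimate. (Equivalently, invoke the inhomogeneous Strichartz inequality; $(p,p)$ is a non-endpoint admissible pair here, so no $\varepsilon$-removal is needed.)

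\emph{The case $\lambda=1$.} Here $P_{1,v}(\tau,\xi)=-\tau+iv_0-(\xi-iv_1)^{2m}$ with $v_0^2+v_1^2=1$; one checks $|P_{1,v}(\tau,\xi)|\to\infty$ as $|(\tau,\xi)|\to\infty$, uniformly in $v$, and that the real zero set of $P_{1,v}$ is a finite subset of a fixed ball --- possibly empty, e.g. when $v_1=0$ --- on which $\nabla P_{1,v}\ne0$. Fix $R,\delta>0$ and split $1/P_{1,v}$ over the regions $\{|(\tau,\xi)|\le R,\ |P_{1,v}|>\delta\}$, $\{|(\tau,\xi)|>R\}$ and $\{|P_{1,v}|\le\delta\}$ (the last contained in $\{|(\tau,\xi)|\le R\}$ for $R$ large). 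On the first, the multiplier is bounded with compact support, so it maps $L^{p'}\to L^{p}$ by interpolating $L^1\to L^\infty$ with $L^2\to L^2$. On the second, the curve $\{\tau+\xi^{2m}=0\}$ is non-degenerate ($|\xi|$ bounded below) and $-iv$ is a lower-order perturbation, so a Littlewood--Paley decomposition plus the argument of the case $\lambda=0$ at each dyadic scale gives the bound. The third region is the crux: one linearizes $P_{1,v}$ near its zero set (or near the arc of $\{\tau+\xi^{2m}\approx0\}$ it meets) and, via the $TT^{\ast}$ identity, reduces the $L^{p'}\to L^{p}$ bound to an adjoint-restriction estimate for the curve $\gamma(\xi)=(-\xi^{2m},\xi)$. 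Since the curvature of $\gamma$ vanishes to order $2m-2$ at $\xi=0$ when $m\ge2$, one must use the version relative to affine arclength measure $d\mu=c\,|\xi|^{(2m-2)/3}\,d\xi$, namely $\|\widehat{g\,d\mu}\|_{L^{p}(\mathbb{R}^2)}\le C\|g\|_{L^2(d\mu)}$ with $p=4m+2\ge6$ (a Sj\"olin-type estimate for monomial plane curves); for $m=1$ it is the classical restriction estimate for the parabola. Summing the three contributions and undoing the scaling yields \eqref{equ3.1}.

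The delicate point throughout is \emph{uniformity in $(\lambda,v)$}. The restriction argument on the third region must survive the complex translation by $-i\lambda v$ --- which bends the real curve $\tau+\xi^{2m}=0$ into a complex one and, as $\lambda\to0^+$ or $v_1\to0$, drives the real zero set of $P_{\lambda,v}$ onto the single point $(0,0)$, i.e. onto the flat point of $\gamma$ when $m\ge2$ --- with constants organized so that the parabolic rescaling restores a bound free of $\lambda$. Controlling this interaction between the degenerate point of $\gamma$ and the perturbation, with uniform constants, is the heart of the proof; the case $\lambda=0$ and the off-characteristic regions are comparatively routine.
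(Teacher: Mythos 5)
Your reduction by parabolic scaling to $\lambda\in\{0,1\}$ and your treatment of the case $\lambda=0$ (van der Corput dispersive bound, interpolation, Hardy--Littlewood--Sobolev) are correct and consistent with what the paper does in its Step 1. But the case $\lambda=1$, which is the entire content of the lemma, contains a concrete error and a missing core step. The error: you assert that the real zero set of $P_{1,v}(\tau,\xi)=-\tau+iv_0-(\xi-iv_1)^{2m}$ is a finite subset of a \emph{fixed} ball. It is finite, but not uniformly bounded in $v$: the zeros solve $v_0=\operatorname{Im}(\xi-iv_1)^{2m}=-2mv_1\xi^{2m-1}+O(v_1^3\xi^{2m-3})$, so as $v_1\to0$ (equivalently, as the original $\lambda\to0^+$, which your own scaling shows is unavoidable) they escape to infinity like $|v_1|^{-1/(2m-1)}$. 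Consequently your three-region decomposition with fixed $R,\delta$ collapses: the region $\{|P_{1,v}|\le\delta\}$ is not contained in $\{|(\tau,\xi)|\le R\}$, the compactly supported piece cannot be bounded by $CR^2/\delta$ uniformly, and in the high-frequency region the term $-iv$ is precisely \emph{not} a negligible perturbation at the $v$-dependent frequencies where the zero set sits. The missing step: even granting the decomposition, the reduction of the neighborhood of the zero set to a restriction estimate is only asserted ``via the $TT^*$ identity,'' and you yourself flag the uniformity there as ``the heart of the proof'' without supplying it. (As a secondary point, the affine arclength measure is not the natural one here: the co-area factor is $|\partial_\tau P|\equiv1$, so the relevant measure on the curve is $d\xi$, for which the $L^2\to L^{4m+2}$ extension bound is exactly the diagonal Strichartz estimate; no affine-arclength input is needed, nor would it obviously mesh with the exponents.)

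The paper resolves exactly this difficulty by a different route. It normalizes the \emph{spatial} component of the weight to $1$, leaving an unbounded parameter $b\in\mathbb{R}$ in the time direction, writes the conjugated symbol as $\tau+P(\xi)+iQ_b(\xi)$ with $P$ real of degree $2m$ and $Q_b$ real of degree $2m-1$ with roots $\xi_{b,j}$, and performs a Littlewood--Paley decomposition in $\xi$ \emph{adapted to the real parts of these roots} --- so the decomposition moves with $b$ rather than being fixed. On each dyadic piece $|Q_b(\xi)|$ is comparable to its value at a reference point, so $Q_b$ can be frozen and the multiplier compared with $1/(\tau+P(\xi)+i\,\mathrm{const})$, which is exactly the uniform estimate of Step 1 (van der Corput plus HLS; no restriction theory for degenerate curves is needed). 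The difference of the two multipliers is controlled by Mihlin's theorem together with an integrable Poisson-kernel bound in the variable $\rho=\tau+P(\xi)$, and the pieces are resummed by the Littlewood--Paley square function with $p<2<p'$. If you want to salvage your outline, you would at minimum have to let your decomposition depend on $v$ in the same way, and supply the uniform bound near the (possibly very high-frequency) zero set that you currently leave as a black box.
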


The special case $\lambda=0$ is of course a result of restriction estimate \eqref{e127} when $n=1$, and the mentioned Carleman estimate \eqref{1.24} is the special case $v=(0,1)$. 

To give a quick overview of what comes different in the higher order case, we shortly sketch the proof of Lemma \ref{lem3.1}. Denoted by $v=(v_0,v_1)$, the scaling $((\lambda v_1)^{2m}t,\lambda v_1x)\rightarrow(t,x)$ reduces \eqref{equ3.1} to the case $\lambda=1$ and $v=(b,1)$ for all $b\in\mathbb{R}$. We thus further set $g=e^{\langle(t,x),(b,1)\rangle}u$ and consider the conjugated form of \eqref{equ3.1}
\begin{equation*}
\left\|g\right\|_{L^{4m+2}(\mathbb{R}^{2})}\leq C\|(i\partial_t-(D_x+i)^{2m}-ib)g\|_{L^\frac{4m+2}{4m+1}(\mathbb{R}^{2})},\quad g\in C_c^\infty(\mathbb{R}^2),\,b\in\mathbb{R}.
\end{equation*}
Apparently, this should be implied by the uniform Fourier multiplier estimate
\begin{equation}\label{eq39}
\left\|\mathscr{F}^{-1}(M_b(\tau, \xi)\hat{f}(\tau,\xi))\right\|_{L^{4m+2}(\mathbb{R}^{2})}\leq C\|f\|_{L^\frac{4m+2}{4m+1}(\mathbb{R}^{2})},
\end{equation}
for all $f=(i\partial_t-(D_x+i)^{2m}-ib)g$ with $g\in C_c^\infty(\mathbb{R}^2)$, where $C>0$ is independent of $b\in\mathbb{R}$, and
\begin{equation}\label{e130}
M_b(\tau,\xi)=\frac{1}{\tau+(\xi+i)^{2m}+ib}.
\end{equation}

Following the scheme introduced in \cite{KS}, our priori to consider is a "uniform resolvent estimate" like \eqref{e126}, and we claim it in a different form for technical reasons.
\begin{claim}\label{cl1.7}
Let $P(\xi)=\mathrm{Re}\,(\xi+i)^{2m}$. Then
\begin{equation}\label{eq310}
\left\|\mathscr{F}^{-1}\left(\frac{\hat{f}(\tau,\xi)}{\tau+P(\xi)+z}\right)\right\|_{L^{4m+2}(\mathbb{R}^{2})}\leq C\|f\|_{L^\frac{4m+2}{4m+1}(\mathbb{R}^{2})},\quad f\in C_c^\infty(\mathbb{R}^2),\,z\in\mathbb{C}\setminus\mathbb{R}.
\end{equation}
\end{claim} 
The reduction from \eqref{eq39} to \eqref{eq310} will be a frequency localization argument by the Littlewood-Paley theory, along with a delicate analysis of the zeros of polynomial $Q_b(\xi)=\mathrm{Im}\,(\xi+i)^{2m}+b$ which is the other part of $M_b(\tau,\xi)$, and the boundedness for a bunch of Fourier multipliers frozen at different frequency scales accordingly to the zeros. When $m>1$, since the zeros, and consequently the Fourier multipliers encountered are all \textbf{implicitly} depending on $b$, the difficulty there will be showing that the estimates for them are uniform in $b\in\mathbb{R}$, for the goal of achieving the uniform estimate \eqref{eq39}. We note that the dependence on $b$ is explicit when $m=1$, and in \cite{KS} the above mechanism in all dimensions was processed in a quite straightforward manner.

Our main obstacle to proving a Carleman estimate in higher spatial dimensions is relevant to a proper version of Claim \ref{cl1.7} that extends the restriction estimate \eqref{e127}. The issue says due to a highly possible phase degeneracy problem, certain oscillatory integral estimate we need seems false in higher dimensions, whereas it is true in dimension one by the van der Corput lemma, and it is also true in the second order case for all dimensions for there is no degeneracy at all. We will discuss this in more details in Remark \ref{rk3.1}. Upon any indefinite resolution of such an issue, however, the higher dimensional argument for the frequency localization seems not clearly similar to that in the proof of Lemma \ref{lem3.1}.

\subsection{Organization and notations}\ 

The rest of this paper is organized as what follows. In section \ref{sec2}, we shall first prove Proposition \ref{prop2.4}, Proposition \ref{lm2.6}, and then combine them to show the first main result Theorem \ref{thm1.1}. The constructions for Examples \ref{ex12} will be placed at the end of this section. In section \ref{sec3}, we shall first show how to use Lemma \ref{lem3.1} to prove our second main result Theorem \ref{thm1.2}. After that, we establish Claim \ref{cl1.7}, and then use it to complete the proof of Lemma \ref{lem3.1}.

Throughout the paper, $C$ denotes a generic positive constant whose value may vary from line to line. We generally use $D=i^{-1}\partial$ for the convenience of taking Fourier transform, and the subscript will be specified when necessary. For $f,\,g\in L^2$, $(f,g)=\int f\bar{g}dx$ denotes the inner product. The Fourier transform of $f$ in $\mathbb{R}^n$ is defined to be $(\mathscr{F}f)(\xi)=\hat{f}(\xi)=\int e^{-ix\cdot\xi}f(x)dx$. We sometimes use $\tilde{f}(t,\xi)=\int e^{-ix\cdot\xi}f(t,x)dx$ to denote the spatial Fourier transform of $f$.


\section{$L^2$ quantitative unique continuation from two times}\label{sec2}

\subsection{Logarithmic convexity of weighted energy}\label{sb2.1}\ 

We shall prove Proposition \ref{prop2.4} in this part. Assume $m,\,n\in\mathbb{N}_+$. First, we need a parabolic decay estimate, the version of which was earlier considered in Escauriaza et al. \cite{EKPV-JEMS08}. Let $K(t,x,y)$ be the kernel of higher order heat semigroup $\{e^{-t((-\Delta_x)^m+V)}\}_{t\geq0}$ on $L^2(\mathbb{R}^n)$ where $V\in L^\infty(\mathbb{R}^n)$ is real-valued. It is known (see Huang et al. \cite[Theorem 1.2]{HWZD}) that there exist $C_1,\,C_2,\,\omega_0>0$ for the following to hold
\begin{equation*}\label{eq2.1}
|K(t,x,y)|\leq C_1t^{-\frac{n}{2m}}\exp\left\{-C_2t^{-\frac{1}{2m-1}}|x-y|^\frac{2m}{2m-1}+\omega_0t\|V\|_{\infty}\right\},\quad t>0,\,x,\,y\in\mathbb{R}^n.
\end{equation*}
By Zheng and Zhang \cite[Theorem 2.1]{ZZ}, it further follows that for some $C,\,c>0$, the kernel $K(z,x,y)$ of the analytic semigroup $\{e^{-z((-\Delta_x)^m+V)}\}_{\mathrm{Re}\,z>0}$ satisfies
\begin{equation}\label{eq2.2}
|K(z,x,y)|\leq C(\mathrm{Re}z)^{-\frac{n}{2m}}\exp\left\{-c\mathrm{Re}\,z\left(\mbox{$\frac{|x-y|}{|z|}$}\right)^\frac{2m}{2m-1}+\omega_0\mathrm{Re}\,z\|V\|_{\infty}\right\},\quad\mathrm{Re}\,z>0,\,x,\,y\in\mathbb{R}^n.
\end{equation}

\begin{lemma}\label{lm2.1}
Suppose $A>0$, $B\in\mathbb{R}$ and $V\in L^\infty(\mathbb{R}^n)$ is real-valued. Then there exist $N_1,\,N_2>0$ which are independent of $A,\,B$ and $V$, such that
\begin{equation}\label{eq2.3}
\left\|e^{\Theta_{A,\,B}(\gamma)|x|^\frac{2m}{2m-1}}e^{-(A+iB)((-\Delta_x)^m+V)}f\right\|_{L^2}\leq N_1e^{\omega_0A\|V\|_{\infty}}\left(1+A^{-2}B^2\right)^\frac n2\left\|e^{\gamma|x|^\frac{2m}{2m-1}}f\right\|_{L^2},
\end{equation}
holds for all $\gamma>0$ and $f$ with $e^{\gamma|x|^{2m/(2m-1)}}f\in L^2(\mathbb{R}^n)$, where $\omega_0$ is the same in \eqref{eq2.2}, and
\begin{equation}\label{eq2.4}
\Theta_{A,\,B}(\gamma)=\frac{\gamma}{(1+N_2A(1+A^{-2}B^2)^m\gamma^{2m-1})^\frac{1}{2m-1}}.
\end{equation}
\end{lemma}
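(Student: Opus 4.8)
The strategy is to realize the weighted operator $e^{\Theta_{A,B}(\gamma)|\cdot|^p}e^{-(A+iB)((-\Delta)^m+V)}e^{-\gamma|\cdot|^p}$ as an integral operator with a kernel that can be estimated pointwise using \eqref{eq2.2}, and then to bound it on $L^2$ by Schur's test. Write $z=A+iB$ so that $\mathrm{Re}\,z=A$ and $|z|^2=A^2+B^2=A^2(1+A^{-2}B^2)$. The kernel of the weighted operator is
\begin{equation*}
\widetilde{K}(z,x,y)=e^{\Theta_{A,B}(\gamma)|x|^p}K(z,x,y)e^{-\gamma|y|^p},
\end{equation*}
and by \eqref{eq2.2} we have, with $c,\omega>0$ from that estimate,
\begin{equation*}
|\widetilde{K}(z,x,y)|\leq CA^{-\frac{n}{2m}}e^{\omega A}\exp\left\{\Theta_{A,B}(\gamma)|x|^p-c\,A^{1-p}(1+A^{-2}B^2)^{-\frac{p}{2}}|x-y|^p-\gamma|y|^p\right\},
\end{equation*}
where I used $A/|z|^p=A\cdot|z|^{-p}=A^{1-p}(1+A^{-2}B^2)^{-p/2}$ (recalling $p=\frac{2m}{2m-1}$, so $p/2=\frac{m}{2m-1}$ and $1-p=-\frac{1}{2m-1}$; thus $A^{1-p}(1+A^{-2}B^2)^{-p/2}$ is exactly of the form appearing in the denominator of $\Theta$ up to the power $2m-1$).

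The heart of the argument is the elementary two-variable inequality: there is a choice of $N_2>0$ such that, setting $\kappa=c\,A^{1-p}(1+A^{-2}B^2)^{-p/2}$ (the coefficient of $|x-y|^p$ above) and $\Theta=\Theta_{A,B}(\gamma)$ as in \eqref{eq2.4}, one has
\begin{equation*}
\Theta|x|^p-\kappa|x-y|^p-\gamma|y|^p\leq -\delta|x-y|^p\quad\text{for some }\delta>0\text{ uniform in }A,B,\gamma,
\end{equation*}
or at least $\leq C$ with the leftover giving integrable decay in $x-y$. The point of the precise exponent $\frac{1}{2m-1}$ in \eqref{eq2.4} is that it is exactly the value making this work: since $p\in(1,2]$, the function $t\mapsto t^p$ is convex, so $|x|^p\leq 2^{p-1}(|x-y|^p+|y|^p)$, hence $\Theta|x|^p\leq 2^{p-1}\Theta|x-y|^p+2^{p-1}\Theta|y|^p$. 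Because $\Theta\leq\gamma$ always and, more quantitatively, $\Theta^{2m-1}(1+N_2A(1+A^{-2}B^2)^m\gamma^{2m-1})=\gamma^{2m-1}$, one checks that $2^{p-1}\Theta\leq\gamma$ as soon as the ``$N_2$'' term dominates, i.e. when $N_2 A(1+A^{-2}B^2)^m\gamma^{2m-1}\gtrsim 1$; and $2^{p-1}\Theta\leq\tfrac12\kappa$ in the regime where that term is small, since there $\Theta\approx\gamma$ and $\kappa\approx c A^{1-p}(1+A^{-2}B^2)^{-p/2}$, and smallness of $N_2 A(1+A^{-2}B^2)^m\gamma^{2m-1}$ forces $\gamma^{2m-1}\lesssim A^{-1}(1+A^{-2}B^2)^{-m}$, which after taking $(2m-1)$-th roots and comparing exponents $1-p=-\frac{1}{2m-1}$, $-m\cdot\frac{1}{2m-1}=-\frac{m}{2m-1}=-\frac p2$ gives exactly $\gamma\lesssim A^{1-p}(1+A^{-2}B^2)^{-p/2}\approx\kappa$. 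Choosing $N_2$ large enough to make the constants line up in both regimes yields the kernel bound
\begin{equation*}
|\widetilde{K}(z,x,y)|\leq CA^{-\frac{n}{2m}}e^{\omega A}\exp\left\{-\tfrac{c}{2}A^{1-p}(1+A^{-2}B^2)^{-\frac p2}|x-y|^p\right\}.
\end{equation*}

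Finally, I apply Schur's test: $\sup_x\int|\widetilde{K}(z,x,y)|\,dy=\sup_y\int|\widetilde{K}(z,x,y)|\,dx$ by symmetry of the bound in $x-y$, and substituting $w=x-y$,
\begin{equation*}
\int_{\mathbb{R}^n}A^{-\frac{n}{2m}}\exp\left\{-\tfrac c2 A^{1-p}(1+A^{-2}B^2)^{-\frac p2}|w|^p\right\}dw\leq C\,A^{-\frac{n}{2m}}\left(A^{1-p}(1+A^{-2}B^2)^{-\frac p2}\right)^{-\frac np}=C(1+A^{-2}B^2)^{\frac n2},
\end{equation*}
where in the last step I used $1-p=-\frac1{2m-1}$ so that $(A^{1-p})^{-n/p}=A^{n(p-1)/p}=A^{n/(2m)}$, which cancels the prefactor $A^{-n/2m}$, and $(-p/2)\cdot(-n/p)=n/2$. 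Thus $\|\widetilde{K}\|_{L^2\to L^2}\leq N_1 e^{\omega A}(1+A^{-2}B^2)^{n/2}$, which is \eqref{eq2.3}. The main obstacle is the two-variable exponent inequality in the previous paragraph: one must track the competition between the ``weight gain'' $\Theta|x|^p$ and the Gaussian-type decay $\kappa|x-y|^p$ across the two regimes (small vs.\ large $N_2A(1+A^{-2}B^2)^m\gamma^{2m-1}$), and it is precisely the exponent $\frac1{2m-1}$ built into $\Theta_{A,B}$ that makes the bookkeeping close with constants independent of $A,B,\gamma$; everything else is the standard heat-kernel-bound plus Schur-test routine.
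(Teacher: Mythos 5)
Your overall strategy is the same as the paper's (the analytic-semigroup kernel bound \eqref{eq2.2}, a pointwise inequality for the exponent $\Theta|x|^p-\kappa|x-y|^p-\gamma|y|^p$, and then an $L^2$ bound for the resulting convolution-type kernel; the paper uses Young's convolution inequality where you use Schur's test, which is immaterial, and your power counting in $A$ and $(1+A^{-2}B^2)$ at the end is correct). The genuine gap is in the key two-variable exponent inequality. The crude convexity bound $|x|^p\le 2^{p-1}\bigl(|x-y|^p+|y|^p\bigr)$ inflates the coefficient of $|y|^p$ from the $\gamma$ you must not exceed to $2^{p-1}\Theta$, and there is no room at all on that term: the leftover $(2^{p-1}\Theta-\gamma)|y|^p$ grows like $|y|^p$ and cannot be absorbed into $-\delta|x-y|^p$ or into a constant (take $x=y$ with $|y|\to\infty$). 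Your two regimes each verify only one of the two needed coefficient bounds. In the regime $N_2A(1+A^{-2}B^2)^m\gamma^{2m-1}\le 1$ you check $2^{p-1}\Theta\le\tfrac12\kappa$, but the $|y|^p$ coefficient genuinely fails there: since $p-1=\tfrac1{2m-1}$, in that regime $\Theta\ge 2^{-1/(2m-1)}\gamma$, hence $2^{p-1}\Theta\ge\gamma$ (strictly larger in the interior), and enlarging $N_2$ only shrinks the regime without curing the defect inside it, because there $\Theta$ is comparable to $\gamma$ independently of $N_2$. So the claimed kernel bound does not follow from the argument as written, even though the bound itself is true.

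The repair is to replace the lossless-in-neither-term inequality by the sharp weighted convexity (Young-type) inequality: for $\theta\in(0,1)$,
\begin{equation*}
|u+w|^p\le\theta^{1-p}|u|^p+(1-\theta)^{1-p}|w|^p,
\end{equation*}
applied with $u=x-y$, $w=y$ and $\theta=\frac{t}{1+t}$, $t=(2\gamma/\kappa)^{1/(p-1)}$. This yields exactly $a|x|^p\le\tfrac\kappa2|x-y|^p+\gamma|y|^p$ with $a=\gamma\bigl(1+(2\gamma/\kappa)^{1/(p-1)}\bigr)^{1-p}$, i.e. with no loss on the $\gamma|y|^p$ term, and unwinding $\kappa=cA^{1-p}(1+A^{-2}B^2)^{-p/2}$ shows that this $a$ is precisely $\Theta_{A,B}(\gamma)$ of \eqref{eq2.4} with $N_2=(2/c)^{2m-1}$ --- this is where the exact form of $\Theta_{A,B}$ comes from, rather than from a large choice of $N_2$. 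This is exactly the computation in the paper's proof; once the exponent inequality is established this way, your Schur-test step and the cancellation giving the factor $(1+A^{-2}B^2)^{n/2}$ in \eqref{eq2.3} go through as you wrote them.
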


\begin{proof}
Let $z=A+iB$, then \eqref{eq2.2} reads
\begin{equation*}
|K(A+iB,x,y)|\leq\frac{Ce^{\omega_0A\|V\|_{\infty} }}{A^\frac{n}{2m}}\exp\left\{-\frac{cA|x-y|^\frac{2m}{2m-1}}{(A^2+B^2)^\frac{m}{2m-1}}\right\},\quad x,\,y\in\mathbb{R}^n.
\end{equation*}
Denoted by $p=\frac{2m}{2m-1}$, we thus have for $a>0$ that
\begin{equation*}
\begin{split}
\left\|e^{a|x|^p}e^{-(A+iB)((-\Delta)^m+V)}f\right\|_{L^2}\leq&\left\|\int_{\mathbb{R}^n} e^{a|x|^p}|K(A+iB,x,y)||f(y)|dy\right\|_{L^2}\\
\leq&\frac{Ce^{\omega_0A\|V\|_{\infty}}}{A^\frac{n}{2m}}\left\|\int_{\mathbb{R}^n} e^{a|x|^p-b|x-y|^p-\gamma|y|^p}|e^{\gamma|y|^p}f(y)|dy\right\|_{L^2},
\end{split}
\end{equation*}
where $b=cA/(A^2+B^2)^\frac{m}{2m-1}$. If we take
\begin{equation*}
a=\frac{\gamma}{\left(1+\left(\frac{2\gamma}{b}\right)^\frac{1}{p-1}\right)^{p-1}},
\end{equation*}
by the convexity of $|\cdot|^p$, we have for all $x,\,y\in\mathbb{R}^n$ that
\begin{equation*}
\begin{split}
&a|x|^p-b|x-y|^p-\gamma|y|^p\\
=&\gamma\left|\frac{(2\gamma/b)^\frac{1}{p-1}}{1+(2\gamma/b)^\frac{1}{p-1}}\cdot\frac{(1+(2\gamma/b)^\frac{1}{p-1})^\frac1p(x-y)}{(2\gamma/b)^\frac{1}{p-1}}+\frac{(1+(2\gamma/b)^\frac{1}{p-1})^\frac1py}{1+(2\gamma/b)^\frac{1}{p-1}}\right|^p\\
&\quad-\frac{\gamma}{1+(2\gamma/b)^\frac{1}{p-1}}\left|\left(1+(2\gamma/b)^\frac{1}{p-1}\right)^\frac1py\right|^p-b|x-y|^p\\
\leq&\frac{\gamma(2\gamma/b)^\frac{1}{p-1}}{1+(2\gamma/b)^\frac{1}{p-1}}\left|\frac{(1+(2\gamma/b)^\frac{1}{p-1})^\frac1p(x-y)}{(2\gamma/b)^\frac{1}{p-1}}\right|^p-b|x-y|^p\\
=&-\frac b2|x-y|^p.
\end{split}
\end{equation*}
Using Young's inequality we derive
\begin{equation*}
\begin{split}
\left\|\int_{\mathbb{R}^n} e^{a|x|^p-b|x-y|^p-\gamma|x|^p}|e^{\gamma|y|^p}f(y)|dy\right\|_{L^2}\leq&\left\|\int_{\mathbb{R}^n} e^{-\frac12b|x-y|^p}|e^{\gamma|y|^p}f(y)|dy\right\|_{L^2}\\
\leq&C_{m,n}b^{-\frac np}\left\|e^{\gamma|x|^p}f\right\|_{L^2},
\end{split}
\end{equation*}
where $C_{m,n}=\int_{\mathbb{R}^n}e^{-\frac12|x|^p}dx$. Thus \eqref{eq2.3} holds for $\Theta_{A,\,B}(\gamma)=a$, $N_1=CC_{m,n}(\frac2c)^{(2m-1)n/2m}$ and $N_2=(\frac2c)^{2m-1}$.
\end{proof}

For $u$ in Proposition \ref{prop2.4} and any $\epsilon>0$, we define
\begin{equation}\label{eq2.7}
u_\epsilon(t)=e^{-\epsilon((-\Delta)^m+V)}u(t)=e^{-(\epsilon+it)((-\Delta)^m+V)}u(0),\quad t\in\mathbb{R}.
\end{equation}
By semigroup analyticity, we have $u_\epsilon\in C^\infty(\mathbb{R};H^{2m}(\mathbb{R}^n))$ and
\begin{equation*}\label{eq2.8}
i\partial_tu_\epsilon=((-\Delta_x)^m+V)u_\epsilon.
\end{equation*}
We still use $p=\frac{2m}{2m-1}$ in the next lemma.

\begin{lemma}\label{lm2.2}
There exists $\gamma_\epsilon>0$ such that
\begin{equation}\label{eq2.9}
\sum_{|\nu|\leq 2m-1}\sup_{0\leq t\leq1}\left\|e^{\gamma_\epsilon|x|^p}\partial_x^\nu u_\epsilon(t,x)\right\|_{L_x^2}+\sup_{0\leq t\leq1}\left\|e^{\gamma_\epsilon|x|^p}(-\Delta_x)^mu_\epsilon(t,x)\right\|_{L_x^2}<\infty.
\end{equation}
\end{lemma}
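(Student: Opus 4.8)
The plan is to factor $u_\epsilon(t)=e^{-(\epsilon/2)((-\Delta)^m+V)}u_{\epsilon/2}(t)$ and to combine two facts: a weighted $L^2$ bound on $u_{\epsilon/2}(t)$ that is uniform in $t\in[0,1]$, which comes straight from Lemma \ref{lm2.1}; and pointwise heat--kernel--type bounds on the kernels of the operators $e^{-(\epsilon/2)((-\Delta)^m+V)}$, $\partial_x^\nu e^{-(\epsilon/2)((-\Delta)^m+V)}$ for $|\nu|\le 2m-1$, and $((-\Delta)^m+V)e^{-(\epsilon/2)((-\Delta)^m+V)}$. These bounds are then turned into weighted estimates by exactly the convexity/Young's inequality computation used in the proof of Lemma \ref{lm2.1}.

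First I would record the uniform bound. Applying Lemma \ref{lm2.1} with $A=\epsilon/2$ and $B=t$, and noting that for $t\in[0,1]$ one has $(1+4\epsilon^{-2}t^2)^{n/2}\le(1+4\epsilon^{-2})^{n/2}$ while $\Theta_{\epsilon/2,t}(\gamma)\ge\Theta_{\epsilon/2,1}(\gamma)=:c_1>0$, it follows, using $e^{c_1|x|^p}\le e^{\Theta_{\epsilon/2,t}(\gamma)|x|^p}$, that
\begin{equation*}
\sup_{t\in[0,1]}\left\|e^{c_1|\cdot|^p}u_{\epsilon/2}(t,\cdot)\right\|_{L^2(\mathbb{R}^n)}\le C_\epsilon\left\|e^{\gamma|\cdot|^p}u(0,\cdot)\right\|_{L^2(\mathbb{R}^n)}=:M_\epsilon<\infty .
\end{equation*}

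Next come the kernel estimates, which carry the real content. For the top--order term I would use $((-\Delta)^m+V)e^{-sA}=-\partial_s e^{-sA}$ with $A=(-\Delta)^m+V$: since $z\mapsto e^{-zA}$ is analytic for $\mathrm{Re}\,z>0$, Cauchy's estimate on a disc of radius $\epsilon/4$ combined with \eqref{eq2.2} gives $|\partial_s K(\epsilon/2,x,y)|\le C_\epsilon e^{-c_\epsilon|x-y|^p}$. For $1\le|\nu|\le 2m-1$ I would start from the Duhamel identity $e^{-sA}=e^{-s(-\Delta)^m}-\int_0^s e^{-(s-\sigma)(-\Delta)^m}V e^{-\sigma A}\,d\sigma$, apply $\partial_x^\nu$, and estimate using the free bound $|\partial_x^\nu K_0(r,x,z)|\le C r^{-(n+|\nu|)/2m}e^{-c r^{-1/(2m-1)}|x-z|^p}$ (proved as in \eqref{eq1.3.1}, or quoted from \cite{BD,D}), the bound \eqref{eq2.2} for $K(\sigma,z,y)$, and $|V|\le\|V\|_{L^\infty}$; carrying out the $z$--integral produces again a heat--type kernel at scale $\le s$ times $(s-\sigma)^{-|\nu|/2m}$, and since $|\nu|/2m<1$ the remaining integral $\int_0^s(s-\sigma)^{-|\nu|/2m}\,d\sigma$ converges. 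Together with $|\partial_x^\nu K_0(\epsilon/2,x,y)|\le C_\epsilon e^{-c_\epsilon|x-y|^p}$ this yields $|\partial_x^\nu K(\epsilon/2,x,y)|\le C_\epsilon e^{-c_\epsilon|x-y|^p}$ for all $|\nu|\le 2m-1$.

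Finally, for any operator $T$ whose integral kernel satisfies $|T(x,y)|\le Ce^{-c|x-y|^p}$, choosing $a=c_1\bigl(1+(2c_1/c)^{1/(p-1)}\bigr)^{-(p-1)}>0$ and splitting $e^{a|x|^p}=e^{a|x|^p-c|x-y|^p-c_1|y|^p}e^{c_1|y|^p}$, the convexity of $|\cdot|^p$ gives $a|x|^p-c|x-y|^p-c_1|y|^p\le-\tfrac{c}{2}|x-y|^p$, whence $\|e^{a|\cdot|^p}Tf\|_{L^2}\le C\|e^{c_1|\cdot|^p}f\|_{L^2}$ by Young's inequality, exactly as in Lemma \ref{lm2.1}. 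Applying this with $f=u_{\epsilon/2}(t)$ to the operators above (differentiation under the integral sign being justified by the kernel bounds), letting $\gamma_\epsilon>0$ be the minimum of $c_1$ and the finitely many exponents $a$ so produced, and using $(-\Delta)^m u_\epsilon=((-\Delta)^m+V)u_\epsilon-Vu_\epsilon$ together with $\|e^{\gamma_\epsilon|\cdot|^p}Vu_\epsilon(t)\|_{L^2}\le\|V\|_{L^\infty}\|e^{\gamma_\epsilon|\cdot|^p}u_\epsilon(t)\|_{L^2}$, each term in \eqref{eq2.9} is bounded by $C_\epsilon M_\epsilon$, uniformly in $t\in[0,1]$. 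I expect the main obstacle to be the third paragraph: obtaining pointwise bounds with the sharp decay exponent $p=\tfrac{2m}{2m-1}$ on the spatial derivatives $\partial_x^\nu K(\epsilon/2,\cdot,\cdot)$ when $V$ is merely bounded, so that $e^{-sA}$ is not smoothing past $H^{2m}$; the Duhamel argument succeeds precisely because $|\nu|<2m$ keeps the time integral near $\sigma=s$ integrable.
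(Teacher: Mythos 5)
Your argument is correct in outline, but it takes a genuinely different route from the paper. The paper never estimates derivatives of the perturbed kernel at all: after obtaining the weighted $L^2$ bound on $u_\epsilon$ itself from Lemma \ref{lm2.1}, it gains the derivatives one at a time by an integration-by-parts interpolation, writing $\|e^{\gamma^{(k+1)}|\cdot|^p}\partial_{x_j}\partial_x^\nu u_\epsilon\|_{L^2}^2$ as integrals pairing $\partial_x^\nu u_\epsilon$ against higher derivatives, and bounding these by $\|u_\epsilon\|_{H^{2m}}\cdot\|e^{\gamma^{(k)}|\cdot|^p}\partial_x^\nu u_\epsilon\|_{L^2}$ (the unweighted $H^{2m}$ bound coming for free from ellipticity and analyticity of the semigroup), at the price of shrinking the weight exponent at each step; the top-order term is handled the same way via $((-\Delta)^m+V)u_\epsilon$. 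Your approach instead factors off half the analytic time and imports pointwise Gaussian-type bounds on $\partial_x^\nu K(\epsilon/2,\cdot,\cdot)$ and $\partial_sK(\epsilon/2,\cdot,\cdot)$; the Cauchy-estimate treatment of the top-order term is clean, and the Duhamel argument for $1\le|\nu|\le 2m-1$ does work, but it is the heaviest part of your proof and is where you are thinnest: the composition of the two heat-type kernels must be organized by splitting the $\sigma$-integral at $\sigma=s/2$ and integrating out whichever kernel lives at the shorter time, since otherwise one is left with $\int_0^s(s-\sigma)^{-|\nu|/2m}\sigma^{-n/2m}\,d\sigma$, which diverges for $n\ge 2m$. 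With that (standard) bookkeeping supplied, your proof closes; the trade-off is that you need derivative bounds for the perturbed higher-order heat kernel, which the paper's softer interpolation argument avoids entirely.
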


\begin{proof}[\textbf{Proof}]
Applying Lemma \ref{lm2.1} with $A=\epsilon$ and $B=t$, we have, with
\begin{equation*}
\gamma^{(0)}=\gamma(1+N_2\epsilon(1+\epsilon^{-2})^m\gamma^{2m-1})^{-\frac{1}{2m-1}},
\end{equation*}
that
\begin{equation}\label{eq2.10}
\sup_{0\leq t\leq1}\left\|e^{\gamma^{(0)}|x|^p}u_\epsilon(t,x)\right\|_{L_x^2}<\infty.
\end{equation}
If $k\leq2m-2$ and
\begin{equation*}
\sum_{|\nu|=k}\sup_{0\leq t\leq1}\|e^{\gamma^{(k)}|x|^p}\partial_x^\nu u_\epsilon(t,x)\|_{L_x^2}<\infty,
\end{equation*}
then for any $j=1,\cdots,n$, by taking $\gamma^{(k+1)}<\frac{\gamma^{(k)}}{2}$ we have
\begin{equation}\label{eq2.11}
\begin{split}
&\left\|e^{\gamma^{(k+1)}|x|^p}\partial_{x_j}\partial_x^\nu u_\epsilon(t,x)\right\|^2_{L_x^2}\\
=&-\int_{\mathbb{R}^n} e^{2\gamma^{(k+1)}|x|^p}\partial_x^\nu u_\epsilon\overline{\partial_{x_j}^2\partial_x^\nu u_\epsilon}dx-2p\gamma^{(k+1)}\int_{\mathbb{R}^n}|x|^{p-2}x_je^{2\gamma^{(k+1)}|x|^p}\partial_x^\nu u_\epsilon\overline{\partial_{x_j}\partial_x^\nu u_\epsilon}dx\\
\leq&C\sup_{0\leq t\leq1}\left(\left\|u_\epsilon(t,x)\right\|_{H_x^{2m}}\cdot\left\|e^{\gamma^{(k)}|x|^p}\partial_x^\nu u_\epsilon(t,x)\right\|_{L_x^2}\right),
\end{split}
\end{equation}
which inductively implies
\begin{equation}\label{eq2.12}
\sum_{|\nu|\leq2m-1}\sup_{0\leq t\leq1}\left\|e^{\gamma^{(2m-1)}|x|^p}\partial_x^\nu u_\epsilon(t,x)\right\|_{L_x^2}<\infty.
\end{equation}
Finally we take $\gamma_\epsilon<\frac{\gamma^{(2m-1)}}{2}$, and by \eqref{eq2.10} we have
\begin{equation*}
\begin{split}
2\left\|e^{\gamma_\epsilon|x|^p}(-\Delta_x)^mu_\epsilon\right\|^2_{L_x^2}\leq&\left\|e^{\gamma_\epsilon|x|^p}((-\Delta_x)^m+V)u_\epsilon\right\|^2_{L_x^2}+C\\
=&-i\int_{\mathbb{R}^n} e^{2\gamma_\epsilon|x|^p}(-\Delta_x)^mu_\epsilon\overline{\partial_tu_\epsilon}dx-i\int_{\mathbb{R}^n} e^{2\gamma_\epsilon|x|^p}Vu_\epsilon\overline{\partial_tu_\epsilon}dx+C.
\end{split}
\end{equation*}
Similar to \eqref{eq2.11}, we integrate by parts, use \eqref{eq2.12} and the regularity of $((-\Delta_x)^m+V)u_\epsilon$ to obtain \eqref{eq2.9}.
\end{proof}

The following abstract lemma for logarithmic convexity was proved in \cite{EKPV-MRL08,EKPV-JEMS08}.

\begin{lemma}\label{lm2.3}
Suppose $\mathcal{S}$ and $\mathcal{A}$ are symmetric and anti-symmetric differential operators in $\mathbb{R}^n$ with time-independent smooth bounded coefficients. Then
\begin{equation*}
\partial_t\frac{(\mathcal{S}g,g)}{||g||^2}\geq\frac{(\mathcal{S}g,\mathcal{A}g)+(\mathcal{A}g,\mathcal{S}g)-\frac12||\partial_tg-\mathcal{A}g-\mathcal{S}g||^2}{||g||^2},\quad t\in(0,T),
\end{equation*}
holds for all $g\in C^\infty([0,T];\mathscr{S}(\mathbb{R}^n))$ with $g(t)\neq0$ for all $t\in[0,T]$, where $\|\cdot\|=\|\cdot\|_{L_x^2(\mathbb{R}^n)}$.
\end{lemma}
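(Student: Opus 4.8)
The plan is to differentiate the quotient $N(t):=\dfrac{(\mathcal{S}g,g)}{\|g\|^2}$ directly. Put $H(t)=\|g(t)\|^2$, $D(t)=(\mathcal{S}g(t),g(t))$, so $N=D/H$, and let $F=\partial_t g-\mathcal{A}g-\mathcal{S}g$ be the residual. Since $g\in C^\infty([0,1];\mathscr{S}(\mathbb{R}^n))$ and the coefficients of $\mathcal{S}$ and $\mathcal{A}$ are smooth, bounded and $t$-independent, every inner product below is finite and smooth in $t$, differentiation under the integral sign is legitimate, and the integrations by parts that express the symmetry of $\mathcal{S}$ and the antisymmetry of $\mathcal{A}$ carry no boundary terms. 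The hypothesis $g(t)\neq0$ gives $H(t)>0$ on $[0,1]$, and $D$ is real because $(\mathcal{S}g,g)=\overline{(\mathcal{S}g,g)}$; hence $N\in C^\infty((0,1))$ and the left-hand side makes sense.

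First I would record the two derivative identities $\dot H=2\Re(\partial_t g,g)$ and, using $\partial_t(\mathcal{S}g)=\mathcal{S}(\partial_t g)$ (this is where the $t$-independence of the coefficients of $\mathcal{S}$ is essential) together with its symmetry, $\dot D=2\Re(\partial_t g,\mathcal{S}g)$. Combining them,
\[
\partial_t N=\frac{\dot D}{H}-\frac{D\dot H}{H^2}=\frac{2}{H}\,\Re\big(\partial_t g,\ w\big),\qquad w:=\mathcal{S}g-Ng .
\]
The reason for writing the derivative in terms of $w$ is the orthogonality $(w,g)=D-NH=0$, which will be used twice.

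Next I would insert $\partial_t g=\mathcal{A}g+\mathcal{S}g+F$ and analyse the three terms of $\Re(\partial_t g,w)$ separately. Antisymmetry of $\mathcal{A}$ yields $\Re(\mathcal{A}g,g)=0$, hence $\Re(\mathcal{A}g,w)=\Re(\mathcal{A}g,\mathcal{S}g)=\frac12\big[(\mathcal{S}g,\mathcal{A}g)+(\mathcal{A}g,\mathcal{S}g)\big]$. Writing $\mathcal{S}g=w+Ng$ and using $w\perp g$ gives the exact identity $\Re(\mathcal{S}g,w)=\|w\|^2$. Finally, Cauchy--Schwarz and Young's inequality ($ab\le a^2+\frac14 b^2$ with $a=\|w\|$, $b=\|F\|$) give $\Re(F,w)\ge-\|F\|\,\|w\|\ge-\|w\|^2-\frac14\|F\|^2$. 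Adding the three contributions, the two copies of $\|w\|^2$ cancel and one is left with
\[
\Re\big(\partial_t g,\ w\big)\ \ge\ \frac12\big[(\mathcal{S}g,\mathcal{A}g)+(\mathcal{A}g,\mathcal{S}g)\big]-\frac14\|F\|^2 ,
\]
and multiplying by $2/H$ is precisely the assertion.

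The computation is mechanical and there is no serious obstacle; the one delicate point is the bookkeeping in the final step. One must spend the \emph{whole} $\|w\|^2$ in the Young estimate for $\Re(F,w)$ rather than a fraction of it, so that it is exactly cancelled by the $\|w\|^2$ coming from the $\mathcal{S}g$-term, and that cancellation in turn rests on the orthogonality $\mathcal{S}g-Ng\perp g$; tracking the constants so that the residual comes out as $-\frac12\|\partial_t g-\mathcal{A}g-\mathcal{S}g\|^2$ and not something larger is where a little care is needed. (This is the abstract convexity lemma of Escauriaza--Kenig--Ponce--Vega, so one may alternatively just cite \cite{EKPV-MRL08,EKPV-JEMS08}.)
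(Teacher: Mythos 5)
Your argument is correct: the quotient-rule computation, the orthogonality $(\mathcal{S}g-Ng,\,g)=0$, the exact identity $\Re(\mathcal{S}g,w)=\|w\|^2$, and the Young estimate $\|F\|\|w\|\le\|w\|^2+\tfrac14\|F\|^2$ combine exactly to give the stated bound with the constant $-\tfrac12\|F\|^2$. The paper does not reprove this lemma but simply cites \cite{EKPV-MRL08,EKPV-JEMS08}, and your computation is precisely the standard Escauriaza--Kenig--Ponce--Vega argument (note also that $(\mathcal{S}g,\mathcal{A}g)+(\mathcal{A}g,\mathcal{S}g)=([\mathcal{S},\mathcal{A}]g,g)$, which is the form the commutator term takes in those references).
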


Now we are ready to prove Proposition \ref{prop2.4}.

\begin{proof}[Proof of Proposition \ref{prop2.4}]
Since \eqref{eq2.15} is scaling invariant, we may just assume $T=1$. We use $p=\frac{2m}{2m-1}$ in the proof.

First take a decreasing $\theta\in C^\infty(\mathbb{R};[0,1])$ such that $\theta(s)=1$ when $s\leq1$ and $\theta(s)=0$ when $s\geq2$. Define $\varphi_R(x_1)=\int_{0}^{x_1}\theta(s/R)ds$ for $R>1$, we have $\varphi_R(x_1)\uparrow x_1$ when $R\rightarrow+\infty$. Next, for any $\epsilon\in(0,1)$ and $\lambda\in\mathbb{R}$, we define
\begin{equation*}
f_{\epsilon,\,\lambda,\,R}(t,x)=e^{\lambda\varphi_R(x_1)}u_\epsilon(t,x),
\end{equation*}
where $u_\epsilon$ is defined by \eqref{eq2.7}. Since $\varphi_R$ and all its derivatives are bounded, we know that $f_{\epsilon,\,\lambda,\,R}\in C^\infty([0,1];H^{2m}(\mathbb{R}^n))$. If we employ the notations $f_R=f_{\epsilon,\,\lambda,\,R}$, $P(D_x)=(-\Delta_x)^m$ for convenience, where $D_x=i^{-1}\nabla_x$ and $P(\xi)=|\xi|^{2m}$, we have
\begin{equation}\label{eq2.17}
\partial_tf_R=-i(P(D_x+i\lambda\theta_R(x_1)e_1)+V)f_R=\mathcal{S}_Rf_R+\mathcal{A}_Rf_R-iVf_R,\quad(t,x)\in(0,1)\times\mathbb{R}^n,
\end{equation}
where $\theta_R(x_1)=\theta(x_1/R)$, $e_1$ is the first unit vector in $\mathbb{R}^n$, and
\begin{equation*}
\begin{cases}
\mathcal{S}_R=\frac{1}{2i}(P(D_x+i\lambda\theta_R(x_1)e_1)-P(D_x-i\lambda\theta_R(x_1)e_1)),\\
\mathcal{A}_R=\frac{1}{2i}(P(D_x+i\lambda\theta_R(x_1)e_1)+P(D_x-i\lambda\theta_R(x_1)e_1))=i^{-1}(-\Delta_x)^m+\mathrm{L.O.T.}.
\end{cases}
\end{equation*}
Here $\mathcal{S}_R$ is symmetric with order $2m-1$, $\mathcal{A}_R$ is anti-symmetric with order $2m$, and they both have time-independent smooth bounded coefficients. From now on we assume $f_R(t,\cdot)\neq0$ for $(t,R)\in[0,1]\times(1,+\infty)$, otherwise it is easy to see that $u\equiv0$. We may also assume that
\begin{equation}\label{eq219}
\inf_{(t,R)\in[0,1]\times(1,+\infty)}\|f_R(t,x)\|_{L_x^2}=M\geq1,
\end{equation}
otherwise consider $M^{-1}u_\epsilon$. Now apply Lemma \ref{lm2.3} with $\mathcal{S}=\mathcal{S}_R$, $\mathcal{A}=\mathcal{A}_R$, $g=g_j\in C^\infty([0,1];\mathscr{S}(\mathbb{R}^n))$ with $g_j\rightarrow f_R$ in $L^1([0,1];H^{2m}(\mathbb{R}^n))$, we have by \eqref{eq2.17} that for all $\phi\in C_c^\infty((0,1);\mathbb{R}_+)$,
\begin{equation}\label{eq2.19}
\begin{split}
-\int_{0}^{1}\frac{(\mathcal{S}_Rf_R,f_R)}{\|f_R\|^2_{L_x^2}}\partial_t\phi dt\geq&\int_{0}^{1}\frac{(\mathcal{S}_Rf_R,\mathcal{A}_Rf_R)+(\mathcal{A}_Rf_R,\mathcal{S}_Rf_R)-\frac12\|Vf_R\|^2_{L^2}}{\|f_R\|^2_{L_x^2}}\phi dt\\
\geq&\int_{0}^{1}\frac{(\mathcal{S}_Rf_R,\mathcal{A}_Rf_R)+(\mathcal{A}_Rf_R,\mathcal{S}_Rf_R)}{\|f_R\|^2_{L_x^2}}\phi dt-\frac{\|V\|^2_{L^\infty}}{2}\int_{0}^{1}\phi dt.
\end{split}
\end{equation}
Since $V$ is real valued, it follows from \eqref{eq2.17} that
\begin{equation*}
\partial_t\log\|f_R\|^2_{L_x^2}=\frac{2(\mathcal{S}_Rf_R,f_R)}{\|f_R\|^2_{L_x^2}},
\end{equation*}
therefore \eqref{eq2.19} reads
\begin{equation}\label{eq2.20}
\int_{0}^{1}\log\|f_R\|^2_{L_x^2}\partial_t^2\phi dt\geq\int_{0}^{1}\frac{(\mathcal{S}_Rf_R,\mathcal{A}_Rf_R)+(\mathcal{A}_Rf_R,\mathcal{S}_Rf_R)}{\|f_R\|^2_{L_x^2}}\phi dt-\|V\|^2_{L^\infty}\int_{0}^{1}\phi dt.
\end{equation}
In order to consider sending $R\rightarrow+\infty$ in \eqref{eq2.20}, we first note that
\begin{equation*}
\begin{cases}
P(D_x+i\lambda\theta_R(x_1)e_1)f_R=e^{\lambda\varphi_R(x_1)}P(D_x)u_\epsilon,\\
P(D_x-i\lambda\theta_R(x_1)e_1)f_R=e^{-\lambda\varphi_R(x_1)}P(D_x)e^{2\lambda\varphi_R(x_1)}u_\epsilon=e^{\lambda\varphi_R(x_1)}P(D_x-2i\lambda\theta_R(x_1)e_1)u_\epsilon.
\end{cases}
\end{equation*}
Since $\theta_R$ and all its derivatives are bounded uniformly in $R$, we have
\begin{equation}\label{eq2.21}
\begin{split}
|\mathcal{S}_Rf_R|\leq& C_\lambda\sum_{|\nu|\leq2m-1}e^{|\lambda x_1|}|\partial_x^\nu u_\epsilon|,\\
|\mathcal{A}_Rf_R|\leq&C_\lambda\left(e^{|\lambda x_1|}\left|(-\Delta)^mu_\epsilon\right|+\sum_{|\nu|\leq2m-1}e^{|\lambda x_1|}|\partial_x^\nu u_\epsilon|\right)\\
\leq&C_\lambda e^{|\lambda x_1|}\left|((-\Delta)^m+V)u_\epsilon\right|+C_{\lambda,\,V}\sum_{|\nu|\leq2m-1}e^{|\lambda x_1|}|\partial_x^\nu u_\epsilon|.
\end{split}
\end{equation}
On the other hand, we have the almost everywhere convergence as $R\rightarrow+\infty$:
\begin{equation}\label{eq2.22}
\begin{cases}
\mathcal{S}_Rf_R\rightarrow\frac{1}{2i}(P(D_x+i\lambda e_1)-P(D_x-i\lambda e_1))f_{\epsilon,\,\lambda},\\
\mathcal{A}_Rf_R\rightarrow\frac{1}{2i}(P(D_x+i\lambda e_1)+P(D_x-i\lambda e_1))f_{\epsilon,\,\lambda},
\end{cases}
\end{equation}
where $f_{\epsilon,\,\lambda}=e^{\lambda x_1}u_\epsilon$. By Lemma \ref{lm2.2}, \eqref{eq2.21}, \eqref{eq2.22} and dominated convergence, we let $R\rightarrow+\infty$ in \eqref{eq2.20}. Recall the assumption \eqref{eq219}, we have
\begin{equation*}
\int_{0}^{1}\log\|f_{\epsilon,\,\lambda}\|^2_{L_x^2}\partial_t^2\phi dt\geq-\|V\|^2_{L^\infty}\int_{0}^{1}\phi dt=\frac{\|V\|^2_{L^\infty}}{2}\int_{0}^{1}t(1-t)\partial_t^2\phi dt.
\end{equation*}
This just means that the distribution
\begin{equation*}
t\mapsto\log\|f_{\epsilon,\,\lambda}(t,x)\|^2_{L_x^2}-\|V\|^2_{L^\infty}\frac{t(1-t)}{2}
\end{equation*}
is convex in $[0,1]$. In the other words,
\begin{equation*}
\left\|e^{\lambda x_1}u_\epsilon(t,x)\right\|^2_{L_x^2}\leq e^{\frac{t(1-t)}{2}\|V\|^2_{L^\infty}}\left\|e^{\lambda x_1}u_\epsilon(0,x)\right\|^{2(1-t)}_{L_x^2}\left\|e^{\lambda x_1}u_\epsilon(1,x)\right\|^{2t}_{L_x^2},\quad t\in[0,1],~\lambda\in\mathbb{R}.
\end{equation*}
By the rotation symmetry of $(-\Delta_x)^m$, we also have
\begin{equation}\label{eq2.26}
\left\|e^{\lambda\cdot x}u_\epsilon(t,x)\right\|^2_{L_x^2}\leq e^{\frac{t(1-t)}{2}\|V\|^2_{L^\infty}}\left\|e^{\lambda\cdot x}u_\epsilon(0,x)\right\|^{2(1-t)}_{L_x^2}\left\|e^{\lambda\cdot x}u_\epsilon(1,x)\right\|^{2t}_{L_x^2},\quad t\in[0,1],~\lambda\in\mathbb{R}^n,
\end{equation}
where we have abused the dimensionality of $\lambda$.

Recall the subordination inequality \eqref{eq2.27}. If we replace $x$ by $(2p\Theta_{\epsilon,0}(\gamma))^\frac1px$ in \eqref{eq2.27}, replace $\lambda$ by $\frac12(2p\Theta_{\epsilon,0}(\gamma))^\frac1p\lambda$ in \eqref{eq2.26}, multiply both sides of \eqref{eq2.26} by $e^{-|\lambda|^q/q}|\lambda|^{n(q-2)/2}$ and integrate over $\mathbb{R}^n$ with respect to $\lambda$, we have by H\"{o}lder's inequality that
\begin{equation}\label{eq2.28}
\begin{split}
\left\|e^{\Theta_{\epsilon,\,0}|x|^p}u_\epsilon(t,x)\right\|^2_{L_x^2}\leq&e^{\frac{t(1-t)}{2}\|V\|^2_{L^\infty}}\left\|e^{\Theta_{\epsilon,\,0}(\gamma)|x|^p}u_\epsilon(0,x)\right\|^{2(1-t)}_{L_x^2}\left\|e^{\Theta_{\epsilon,\,0}(\gamma)|x|^p}u_\epsilon(1,x)\right\|^{2t}_{L_x^2}\\
\leq&Ce^{\frac{t(1-t)}{2}\|V\|^2_{L^\infty}}\left\|e^{\gamma|x|^p}u(0,x)\right\|^{2(1-t)}_{L_x^2}\left\|e^{\gamma|x|^p}u(1,x)\right\|^{2t}_{L_x^2},
\end{split}
\end{equation}
for $t\in[0,1]$, where the last line comes from $u_\epsilon(j)=e^{-\epsilon((-\Delta)^m+V)}u(j)$ for $j=0,1$, and Lemma \ref{lm2.1}. Finally, if we truncate $u_\epsilon(t,x)$ in the left hand side of \eqref{eq2.28} by $\chi_R(x)$ with $\mathrm{supp}\,\chi_R$ contained in a ball with radius $R$, by first letting $\epsilon\rightarrow0$ and then $R\rightarrow+\infty$, \eqref{eq2.15} is proved when $T=1$.
\end{proof}

\subsection{Carleman estimate in spatial dimension one}\label{sb2.2}\ 

We shall prove Proposition \ref{lm2.6} in this part. The main tool is the Tr\`{e}ves identity (see \cite[Lemma 17.2.2]{Hor3}), and the following is a special case in dimension one.

\begin{lemma}\label{lm2.5}
Let $Q(x)=ax+\frac b2x^2+c$ be a real quadratic function in $\mathbb{R}$, and $P$ be a polynomial in $\mathbb{R}$ with real constant coefficients. Then for all $u\in C_c^\infty(\mathbb{R})$, denoted by $v=e^{Q/2}u$, we have
\begin{equation*}\label{eq2.29}
\int_\mathbb{R} e^Q|P(D_x)u|^2dx=\int_\mathbb{R}|P(D_x-D_xQ/2)v|^2dx=\sum_{k\geq0}\frac{b^k}{k!}\int_\mathbb{R}\left|P^{(k)}(D_x+D_xQ/2)v\right|^2dx,
\end{equation*}
where $P^{(k)}$ is the $k$-th derivative of $P$ and the summation above is obviously finite.
\end{lemma}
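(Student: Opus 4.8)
The plan is to separate the statement into a trivial change of variables and one substantive algebraic identity. First I would dispose of the equality $\int_{\mathbb{R}}e^{Q}|P(D)u|^{2}\wrt x=\int_{\mathbb{R}}|P(D-DQ/2)v|^{2}\wrt x$. Since $Q$ is real-valued the left-hand side is $\|e^{Q/2}P(D)u\|_{L^{2}}^{2}$, and with $v=e^{Q/2}u\in C_c^\infty(\mathbb{R})$ the conjugation rule $e^{Q/2}De^{-Q/2}f=\tfrac{1}{i}\big(f'-\tfrac12Q'f\big)=\big(D-\tfrac12DQ\big)f$ gives $e^{Q/2}P(D)e^{-Q/2}=P(D-DQ/2)$, hence $e^{Q/2}P(D)u=P(D-DQ/2)v$. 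This step is pure bookkeeping.

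For the second equality I would set $\Lambda:=D-DQ/2=D-\tfrac{1}{2i}(a+bx)$. Because $D$ is symmetric on $C_c^\infty(\mathbb{R})$ and $\tfrac{1}{2i}(a+bx)$ is multiplication by a purely imaginary function (this is where $a,b\in\mathbb{R}$ enters), the formal adjoint is $\Lambda^{*}=D+DQ/2$; and a one-line computation from $[D,x]=-i$ and $Q''=b$ gives the decisive fact that the commutator is a \emph{scalar},
\[
[\Lambda,\Lambda^{*}]=-b
\]
(here the hypothesis $\deg Q\le 2$ is essential --- it is exactly what makes the commutator a constant rather than an operator). Pairing the desired identity against $v$ and integrating by parts --- legitimate since $v\in C_c^\infty(\mathbb{R})$ produces no boundary terms, and since $P$ and every $P^{(k)}$ have real coefficients so that $P(\Lambda)^{*}=P(\Lambda^{*})$ and $P^{(k)}(\Lambda^{*})^{*}=P^{(k)}(\Lambda)$ --- reduces the second equality of the lemma to the operator identity
\[
P(\Lambda^{*})P(\Lambda)=\sum_{k\ge 0}\frac{b^{k}}{k!}\,P^{(k)}(\Lambda)P^{(k)}(\Lambda^{*}),
\]
a finite sum. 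As $[\Lambda,\Lambda^{*}]$ is central, this is precisely the one-dimensional case of the Tr\`{e}ves identity \cite[Lemma 17.2.2]{Hor3}.

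Finally I would prove this operator identity by an elementary argument valid whenever $[\Lambda,\Lambda^{*}]$ is a central scalar $-b$. The Leibniz rule for commutators gives $[\Lambda,(\Lambda^{*})^{j}]=-jb(\Lambda^{*})^{j-1}$, whence $[\Lambda,g(\Lambda^{*})]=-b\,g'(\Lambda^{*})$ for every polynomial $g$; fed into an induction on $\deg R$ this produces the slightly more flexible bilinear identity $P(\Lambda^{*})R(\Lambda)=\sum_{k\ge 0}\frac{b^{k}}{k!}R^{(k)}(\Lambda)P^{(k)}(\Lambda^{*})$, and specializing $R=P$ completes the proof. Equivalently --- and more transparently --- both sides of the bilinear identity are polynomials in the coefficients of $P$ and $R$, so it is enough to check the case $P(\xi)=e^{t\xi}$, $R(\xi)=e^{s\xi}$, where it reduces to $e^{t\Lambda^{*}}e^{s\Lambda}=e^{bst}\,e^{s\Lambda}e^{t\Lambda^{*}}$, which is the Baker--Campbell--Hausdorff formula for the central commutator $[\Lambda,\Lambda^{*}]=-b$. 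I expect the re-ordering bookkeeping in this last step --- keeping straight which of $\Lambda,\Lambda^{*}$ sits on which side and producing the factors $1/k!$ --- to be the only genuinely fiddly point; everything else is routine.
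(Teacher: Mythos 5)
Your proposal is correct. The paper itself gives no proof of this lemma: it is stated as the one-dimensional, real-polynomial special case of the Tr\`{e}ves identity and justified only by the citation to H\"{o}rmander's Lemma 17.2.2, so there is nothing in the text to compare against step by step. Your argument is a sound self-contained reconstruction of that identity. The first equality is indeed pure conjugation, $e^{Q/2}P(D)e^{-Q/2}=P(D-DQ/2)$, together with the realness of $Q$. For the second, the two pillars you identify are exactly the right ones: since $a,b$ are real, $DQ/2=\tfrac{1}{2i}(a+bx)$ is multiplication by a purely imaginary function, so $\Lambda^{*}=D+DQ/2$, and $[\Lambda,\Lambda^{*}]=2[D,\tfrac{b}{2i}x]=-b$ is a central scalar (this is where $\deg Q\le 2$ is used). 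The reduction to the operator identity $P(\Lambda^{*})P(\Lambda)=\sum_{k}\tfrac{b^{k}}{k!}P^{(k)}(\Lambda)P^{(k)}(\Lambda^{*})$ via $P(\Lambda)^{*}=P(\Lambda^{*})$ for real-coefficient $P$ is legitimate on $C_c^{\infty}$, and the identity itself follows from $[\Lambda,g(\Lambda^{*})]=-b\,g'(\Lambda^{*})$ by induction (or, as you note, from the Weyl/BCH relation $e^{t\Lambda^{*}}e^{s\Lambda}=e^{bst}e^{s\Lambda}e^{t\Lambda^{*}}$ after polarizing in the coefficients). A quick sanity check with $P(\xi)=\xi$ gives $\|\Lambda v\|^{2}=\|\Lambda^{*}v\|^{2}+b\|v\|^{2}$, consistent with $\int(|v'-\tfrac{Q'}{2}v|^{2}-|v'+\tfrac{Q'}{2}v|^{2})dx=\int Q''|v|^{2}dx$, which confirms your sign conventions; these signs matter downstream in Lemma \ref{lm2.6}, where $b=4\alpha/R^{2}>0$ makes every term in the sum nonnegative.
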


\begin{proof}[Proof of Proposition \ref{lm2.6}]
Denoted by $p=\frac{2m}{2m-1}$, $\alpha=\gamma R^p$ and $v=e^{Q/2}u$, we first write
\begin{equation}\label{eq2.31}
\begin{split}
\iint e^Q\left|D_tu+D_x^{2m}u\right|^2dxdt=&\iint\left|(D_t-D_tQ/2)v+(D_x-D_xQ/2)^{2m}v\right|^2dxdt\\
=&I+II+III,
\end{split}
\end{equation}
where
\begin{equation*}
\begin{split}
&I=\iint\left|(D_x-D_xQ/2)^{2m}v\right|^2dxdt,\quad II=\iint\left|(D_t-D_tQ/2)v\right|^2dxdt,\\
&III=2\mathrm{Re}\iint(D_t-D_tQ/2)v\overline{(D_x-D_xQ/2)^{2m}v}dxdt.
\end{split}
\end{equation*}
To treat $I$, notice that $Q=\frac{4\alpha\phi(t)}{R}x+\frac{4\alpha/R^2}{2}x^2+2\alpha(\phi(t))^2$, we apply Lemma \ref{lm2.5} in the spatial integral to have
\begin{equation}\label{eq2.33}
\begin{split}
I=&\sum_{k=0}^{2m}\frac{(4\alpha/R^2)^k}{k!}\iint\left|\frac{(2m)!}{(2m-k)!}(D_x+D_xQ/2)^{2m-k}v\right|^2dxdt\\
\geq&\iint\left|(D_x+D_xQ/2)^{2m}v\right|^2dxdt+\frac{16m^2\alpha}{R^2}\iint\left|(D_x+D_xQ/2)^{2m-1}v\right|^2dxdt.
\end{split}
\end{equation}
To treat II, notice that the commutator
\begin{equation*}
\mbox{$[D_t+D_tQ/2,D_t-D_tQ/2]=\partial_t^2Q=4\alpha(\phi'(t))^2+4\alpha\phi''(t)\left(\frac xR+\phi(t)\right),$}
\end{equation*}
by using $|\frac xR+\phi(t)|\leq d_2$ in $\mathrm{supp}\,v$, we have from integration by parts that
\begin{equation}\label{eq2.35}
II\geq\iint\left|(D_t+D_tQ/2)v\right|^2dxdt-K\alpha\iint|v|^2dxdt,
\end{equation}
where
\begin{equation*}
K=4\sup_{t\in[0,1]}\left((\phi'(t))^2+d_2|\phi''(t)|\right).
\end{equation*}
To treat III, since $[D_x+D_xQ/2,D_t-D_tQ/2]=\partial_x\partial_tQ=\frac{4\alpha}{R}\phi'(t)$ and consequently
\begin{equation*}
\begin{split}
&[(D_x+D_xQ/2)^{2m},D_t-D_tQ/2]\\
=&\sum_{k=0}^{2m-1}(D_x+D_xQ/2)^{2m-1-k}[D_x+D_xQ/2,D_t-D_tQ/2](D_x+D_xQ/2)^k\\
=&\frac{8m\alpha\phi'(t)}{R}(D_x+D_xQ/2)^{2m-1},
\end{split}
\end{equation*}
integration by parts and Cauchy-Schwarz inequality then give
\begin{equation}\label{eq2.37}
\begin{split}
III=&2\mathrm{Re}\iint(D_x+D_xQ/2)^{2m}v\overline{(D_t+D_tQ/2)v}dxdt\\
&\quad\quad\quad+ \frac{16m\alpha}{R}\iint(D_x+D_xQ/2)^{2m-1}v\overline{\phi'(t)v}dxdt\\
\geq&2\mathrm{Re}\iint(D_x+D_xQ/2)^{2m}v\overline{(D_t+D_tQ/2)v}dxdt\\
&\quad-\frac{8m^2\alpha}{R^2}\iint\left|(D_x+D_xQ/2)^{2m-1}v\right|^2dxdt-8\alpha\iint|v|^2dxdt.
\end{split}
\end{equation}
Combining \eqref{eq2.31}, \eqref{eq2.33}, \eqref{eq2.35} and \eqref{eq2.37} gives
\begin{equation}\label{eq2.38}
\begin{split}
\iint e^Q\left|D_tu+D_x^{2m}u\right|^2dxdt\geq&\iint\left|(D_t+D_tQ/2)v+(D_x+D_xQ/2)^{2m}v\right|^2dxdt\\
&+\frac{8m^2\alpha}{R^2}\iint\left|(D_x+D_xQ/2)^{2m-1}v\right|^2dxdt\\
&-(K+8)\alpha\iint|v|^2dxdt.
\end{split}
\end{equation}

Next we study the lower bound of the second line on the right hand side of \eqref{eq2.38}. Notice that
\begin{equation*}
\mbox{$(D_x-D_xQ/2)(D_x+D_xQ/2)=D_x^2+\frac{4\alpha^2}{R^2}\left(\frac xR+\phi(t)\right)^2-\frac{2\alpha}{R^2},$}
\end{equation*}
if $\gamma_0$ is known and $R_0=(d_1^2\gamma_0)^{-\frac1p}$, since $|\frac xR+\phi(t)|\geq d_1$ in $\mathrm{supp}\,v$, we have $2d_1^2\alpha-1\geq d_1^2\alpha$ when $\gamma\geq\gamma_0$ and $R\geq R_0$, and thus
\begin{equation}\label{eq2.40}
\begin{split}
&\iint\left|(D_x+D_xQ/2)^{2m-1}v\right|^2dxdt\\
=&\iint\left|D_x(D_x+D_xQ/2)^{2m-2}v\right|^2dxdt+\frac{4\alpha^2}{R^2}\iint\left(\frac xR+\phi(t)\right)^2\left|(D_x+D_xQ/2)^{2m-2}v\right|^2dxdt\\
&\quad-\frac{2\alpha}{R^2}\iint\left|(D_x+D_xQ/2)^{2m-2}v\right|^2dxdt\\
\geq&\frac{2d_1^2\alpha^2}{R^2}\iint\left|(D_x+D_xQ/2)^{2m-2}v\right|^2dxdt\\
\geq&\left(\frac{2d_1^2\alpha^2}{R^2}\right)^{2m-1}\iint|v|^2dxdt.
\end{split}
\end{equation}
Combining \eqref{eq2.38}, \eqref{eq2.40} and the notation $\alpha=\gamma R^{2m/(2m-1)}$, we have
\begin{equation*}
\iint e^Q\left|D_tu+D_x^{2m}u\right|^2dxdt\geq\left(8m^2(2d_1^2)^{2m-1}\gamma^{4m-2}-K-8\right)\gamma R^\frac{2m}{2m-1}\iint e^Q|u|^2dxdt.
\end{equation*}
Now we can choose $\gamma_0$ large enough to complete the proof of \eqref{eq2.30}.
\end{proof}

\subsection{Proof of uniqueness}\label{sb2.3}\ 

\begin{proof}[Proof of Theorem \ref{thm1.1}]
By translation and scaling, we may assume \eqref{eq1.3} with $T_1=0$, $T_2=1$, and $\|V\|_{L^\infty}=O(T)$. We shall see in the proof that the $\tilde{\gamma}$ will be found independent of $T>0$. We still denote by $p=\frac{2m}{2m-1}$ in the proof. 

First consider $u_{\epsilon,\,\gamma}(t)=e^{-\epsilon\gamma^{-(2m-1)}(D_x^{2m}+V)}u(t)$ for $\epsilon\in(0,1)$, and we apply Lemma \ref{lm2.1} with $A=\epsilon\gamma^{-(2m-1)}$, $B=0$ and $f=u(t,\cdot)$. Then
\begin{equation}\label{eq2.42}
\begin{cases}
u_{\epsilon,\,\gamma}\in C^\infty(\mathbb{R};H^{2m}(\mathbb{R})),\\
i\partial_tu_{\epsilon,\,\gamma}=(D_x^{2m}+V)u_{\epsilon,\,\gamma},\quad(t,x)\in\mathbb{R}^2,\\
\sup\limits_{0\leq t\leq1}\sum\limits_{k=0}\limits^{2m-1}\left\|e^{\frac{\gamma}{8^m(1+N_2)}|x|^p}\partial_x^ku_{\epsilon,\,\gamma}(t,x)\right\|_{L_x^2(\mathbb{R})}<C_{\epsilon,V}<\infty.
\end{cases}
\end{equation}
The conclusion in the last line of \eqref{eq2.42} for $k=0$ is based on Proposition \ref{prop2.4} and the fact (see \eqref{eq2.4}) that
\begin{equation*}
\Theta_{\epsilon\gamma^{-(2m-1)},\,0}(\gamma)=\frac{\gamma}{(1+N_2\epsilon)^\frac{1}{2m-1}}\geq\frac{\gamma}{1+N_2}.
\end{equation*}
The other cases are shown following the proof of Lemma \ref{lm2.2}. In the sequel, we let
\begin{equation}\label{2.24}
\gamma'=\frac{\gamma}{8^m(1+N_2)}.
\end{equation}

Next, we take $\eta\in C_c^\infty((0,1);[0,1])$ such that $\eta\equiv1$ on $[\frac14,\frac34]$; and take $\theta\in C_c^\infty(\mathbb{R};[0,1])$ such that $\theta(x)=1$ when $|x|<\frac12$, $\theta(x)=0$ when $|x|>1$. Denoted by $\theta_R(\cdot)=\theta_R(\frac\cdot R)$, we define $U_{R,\,\epsilon,\,\gamma}(t,x)=\eta(t)\theta_R(x)u_{\epsilon,\,\gamma}(t,x)$. Then $U_{R,\,\epsilon,\,\gamma}\in C^\infty([0,1];H^{2m}(\mathbb{R}))$ satisfies
\begin{equation*}
D_tU_{R,\,\epsilon,\,\gamma}+D_x^{2m}U_{R,\,\epsilon,\,\gamma}=-VU_{R,\,\epsilon,\,\gamma}-i\partial_t\eta\theta_Ru_{\epsilon,\,\gamma}+\eta\sum_{k=0}^{2m-1}C_k\partial_x^{2m-k}\theta_R\partial_x^ku_{\epsilon,\,\gamma}.
\end{equation*}
Let $\phi(t)=-4(t-\frac12)^2+\frac94$. Since $\frac14\leq|\frac xR+\phi(t)|\leq\frac{13}{4}$ when $(t,x)\in\mathrm{supp}\,U_{R,\,\epsilon,\,\gamma}$, we can now apply Proposition \ref{lm2.6} to $U_{R,\,\epsilon,\,\gamma}$ with such $\phi(t)$.
Denoted by $Q=2\gamma_0R^p(\frac xR+\phi(t))^2$ for the $\gamma_0$ found in Proposition \ref{lm2.6}, we have when $R\geq R_0$ for some $R_0>0$ that
\begin{equation}\label{eq2.45}
\begin{split}
&C\gamma_0^{4m-1}R^p\iint e^Q|U_{R,\,\epsilon,\,\gamma}|^2dxdt\\
\leq&\iint e^Q\left|D_tU_{R,\,\epsilon,\,\gamma}+D_x^{2m}U_{R,\,\epsilon,\,\gamma}\right|^2dxdt\\
\leq&C'\|V\|^2_{L^\infty(\mathbb{R})}\iint e^Q|U_{R,\,\epsilon,\,\gamma}|^2dxdt+C'\iint_{\mathrm{supp}\,\partial_t\eta\times\mathrm{supp}\,\theta_R}e^Q|u_{\epsilon,\,\gamma}|^2dxdt\\
&+C'R^{-2}\sum_{k=0}^{2m-1}\iint_{\mathrm{supp}\,\eta\times\mathrm{supp}\,\partial_x^{2m-k}\theta_R}e^Q|\partial_x^ku_{\epsilon,\,\gamma}|^2dxdt.
\end{split}
\end{equation}
When $R^p\gg\|V\|_{L^\infty}^2=O(T^2)$, the potential term on the right hand side of \eqref{eq2.45} is absorbed into the left hand side. Then we can restrict the domain of integration on the left hand side to $(t,x)\in[\frac{7}{16},\frac{9}{16}]\times B(\frac{R}{64})$, where $U_{R,\,\epsilon,\,\gamma}\equiv u_{\epsilon,\,\gamma}$ and
\begin{equation*}
\mbox{$Q\geq2\gamma_0 R^p\cdot\left(-\frac{1}{64}+\frac94-\frac{1}{64}\right)^2=2\gamma_0R^p\cdot\left(\frac94-\frac{1}{32}\right)^2.$}
\end{equation*}
Therefore
\begin{equation}\label{eq2.47}
\begin{split}
&CR^pe^{2\gamma_0R^p\cdot\left(\frac94-\frac{1}{32}\right)^2}\iint_{[\frac{7}{16},\frac{9}{16}]\times B(\frac{R}{64})}|u_{\epsilon,\,\gamma}|^2dxdt\\
\leq&\iint_{\mathrm{supp}\,\partial_t\eta\times\mathrm{supp}\,\theta_R}e^Q|u_{\epsilon,\,\gamma}|^2dxdt+R^{-2}\sum_{k=0}^{2m-1}\iint_{\mathrm{supp}\,\eta\times\mathrm{supp}\,\partial_x^{2m-k}\theta_R}e^Q|\partial_x^ku_{\epsilon,\,\gamma}|^2dxdt.
\end{split}
\end{equation}

In $\mathrm{supp}\,\partial_t\eta\times\mathrm{supp}\,\theta_R$, it follows that $(t,x)\in[0,\frac14]\cup[\frac34,1]\times B(R)$, so if we take $\sigma$ with $(\frac94-\frac14)^2(1+\sigma)=(\frac94-\frac{1}{32})^2$, one has
\begin{equation*}
\begin{split}
Q\leq&\mbox{$2\gamma_0R^p\cdot\left(\left|\frac xR\right|+\frac94-\frac14\right)^2$}\\
\leq&\mbox{$2\gamma_0R^p\left(\frac xR\right)^2\left(1+\frac1\sigma\right)+2\gamma_0R^p\cdot\left(\frac94-\frac14\right)^2(1+\sigma)$}\\
\leq&\mbox{$2\gamma_0\left(1+\frac1\sigma\right)|x|^p+2\gamma_0R^p\cdot\left(\frac94-\frac{1}{32}\right)^2.$}
\end{split}
\end{equation*}
Therefor, if
\begin{equation}\label{eq2.49}
\mbox{$\gamma'\geq\left(1+\frac1\sigma\right)\gamma_0,$}
\end{equation}
we have
\begin{equation}\label{eq2.50}
\iint_{\mathrm{supp}\,\partial_t\eta\times\mathrm{supp}\,\theta_R}e^Q|u_{\epsilon,\,\gamma}|^2dxdt\leq e^{2\gamma_0R^p\cdot\left(\frac94-\frac{1}{32}\right)^2}\sup_{0\leq t\leq1}\left\|e^{\gamma'|x|^p}u_{\epsilon,\,\gamma}(t,x)\right\|^2_{L_x^2(\mathbb{R})}.
\end{equation}

In $\mathrm{supp}\,\eta\times\mathrm{supp}\,\partial_x^{2m-k}\theta_R$ it follows that $(t,x)\in[0,1]\times(B(R)\setminus B(R/2))$, then
\begin{equation*}
\begin{split}
Q\leq&\mbox{$2\gamma_0R^p\cdot\left(1+\frac94\right)^2$}\\
=&\mbox{$2\gamma'|x|^p-\left(2\gamma'|x|^p-2\gamma_0R^p\cdot\left(\frac{13}{4}\right)^2\right)$}\\
\leq&\mbox{$2\gamma'|x|^p-2\left(\frac{\gamma'}{2^p}-\left(\frac{13}{4}\right)^2\gamma_0\right)R^p.$}
\end{split}
\end{equation*}
Thus when
\begin{equation}\label{eq2.52}
\mbox{$\gamma'\geq2^p\cdot\left(\frac{13}{4}\right)^2\gamma_0,$}
\end{equation}
we have
\begin{equation}\label{eq2.53}
\sum_{k=0}^{2m-1}\iint_{\mathrm{supp}\,\eta\times\mathrm{supp}\,\partial_x^{2m-k}\theta_R}e^Q|\partial_x^ku_{\epsilon,\,\gamma}|^2dxdt\leq C\sup_{0\leq t\leq1}\sum_{k=0}^{2m-1}\left\|e^{\gamma'|x|^p}\partial_x^ku_{\epsilon,\,\gamma}(t,x)\right\|^2_{L_x^2(\mathbb{R})}.
\end{equation}

Combining \eqref{eq2.47}, \eqref{eq2.50}, \eqref{eq2.53} and \eqref{eq2.42}, we get
\begin{equation}\label{2.31}
\begin{split}
CR^pe^{2\gamma_0R^p\cdot\left(\frac94-\frac{1}{32}\right)^2}\iint_{[\frac{7}{16},\frac{9}{16}]\times B(\frac{R}{64})}|u_{\epsilon,\,\gamma}|^2dxdt\leq C_{\epsilon,V}\left(e^{2\gamma_0R^p\cdot\left(\frac94-\frac{1}{32}\right)^2}+R^{-2}\right),
\end{split}
\end{equation}
for large $R$. Eliminate the exponential in\eqref{2.31} and let $R\rightarrow+\infty$, $u_{\epsilon,\gamma}\equiv0$ follows on $[\frac{7}{16},\frac{9}{16}]\times\mathbb{R}$ and thus on $[0,1]\times\mathbb{R}$, because
\begin{equation*}
u_{\epsilon,\gamma}(t)=e^{-\epsilon\gamma^{-(2m-1)}(D_x^{2m}+V)}u(t)=e^{-it(D_x^{2m}+V)}\left(e^{-\epsilon\gamma^{-(2m-1)}(D_x^{2m}+V)}u(0)\right).
\end{equation*}
The choices \eqref{eq2.49} and \eqref{eq2.52} for the largeness of $\gamma'$, together with \eqref{2.24} suggest that $\tilde{\gamma}$ is found independent of $\epsilon$ and $V$, so we may let $\epsilon\rightarrow0$ to complete the proof.
\end{proof}

\subsection{Constructions for Examples \ref{ex12}}\label{sb2.4}\ 

For \eqref{1.5}, we first take $f(x)=e^{-2|x|^{2m/(2m-1)}}$, $x\in\mathbb{R}^n$. By the strong continuity of $\{e^{-t((-\Delta_x)^m+V)}\}_{t\geq0}$ on $L^2(\mathbb{R}^n)$, there exists $\epsilon>0$ such that $e^{-\epsilon((-\Delta_x)^m+V)}f\neq0$. Let
\begin{equation*}
u=e^{-(\epsilon+it)((-\Delta_x)^m+V)}f,
\end{equation*}
then $u\in C^\infty(\mathbb{R};H^{2m}(\mathbb{R}^n))$ by semigroup analyticity, $\|u(t,x)\|_{L_x^2}\equiv\|e^{-\epsilon((-\Delta_x)^m+V)}f\|_{L^2}\neq0$, and $u$ solves \eqref{eq14}. We can apply Lemma \ref{lm2.1} with $A=\epsilon$, $B=t$ and $\gamma=1$ to have
\begin{equation*}
\left\|e^{\Theta{\epsilon,\,t}(1)|x|^\frac{2m}{2m-1}}u(t,x)\right\|_{L_x^2}\leq N_1e^{\omega_0\epsilon\|V\|_{\infty}}\left(1+\epsilon^{-2}t^2\right)^\frac n2\left\|e^{|x|^\frac{2m}{2m-1}}f\right\|_{L^2}\leq C(1+|t|)^n.
\end{equation*}
Now $h(t)=\Theta{\epsilon,t}(1)$ is found (see \eqref{eq2.4}) decreasing in $|t|$, satisfying \eqref{e16}. A similar construction earlier found when $m=1$ was in \cite[Remark 1]{EKPV-JEMS08}, where the $(1+|t|)^n$-loss can be avoided by a more straightforward energy method.

For \eqref{1.6}, let $K(z,x)=\mathscr{F}^{-1}(e^{-z|\cdot|^{2m}})(x)$ be the convolution kernel of the analytic semigroup $\{e^{-z(-\Delta_x)^m}\}_{\mathrm{Re}\,z>0}$. If
\begin{equation*}
u(t,x)=K(1+it,x)=\left(e^{-it(-\Delta_x)^m}K(1,\cdot)\right)(x),\quad t\in\mathbb{R},~x\in\mathbb{R}^n,
\end{equation*}
then $u$ is obviously a non-trivial analytic solution to the free equation, and \eqref{1.6} follows by \eqref{eq2.2}.


\section{$L^p$ global unique continuation}\label{sec3}

Till the end, we denote by $p=\frac{4m+2}{4m+1}$ and $p'=4m+2$ for convenience.


\subsection{Proof of uniqueness}\ 

For the sake of completeness, we first show how Lemma \ref{lem3.1} implies Theorem \ref{thm1.2} in a standard way.

\begin{proof}[Proof of Theorem \ref{thm1.2}]
By translation, we may assume $D=\{(t,x)\in \mathbb{R}^{2};\,\langle(t,x),v\rangle>0\}$ for some $v\in \mathbb{S}^1$. We are left to show that $u\equiv0$ in the strip
$$
S_{v,\,\rho}\triangleq\{(t,x)\in\mathbb{R}^{2};\,-\rho\leq \langle(t,x),v\rangle\leq0\}
$$
for some $\rho>0$. 

Denoted by $B=\{(t,x)\in\mathbb{R}^2;\,|(t,x)|<1\}$, we first take $\psi\in C_c^{\infty}(B)$ with $\int_B\psi dxdt=1$. We also take $\phi\in C_c^\infty(B)$ with $\phi(t,x)=1$ if $|(t,x)|<\frac12$. For $0<\epsilon<1$ and $R>1$, denoted by $\psi_\epsilon(t,x)=\epsilon^{-2}\psi(t/\epsilon,x/\epsilon)$ and $\phi_R(t,x)=\phi(t/R,x/R)$, we set $u_\epsilon=u\ast\psi_\epsilon$ and $u_{\epsilon,\,R}=\phi_R u_\epsilon$. Then $u_{\epsilon,\,R}\in C_c^{\infty}(\mathbb{R}^{2})$, and it follows by the vanishing of $u$ that
\begin{align*}
\mathrm{supp}\,u_{\epsilon}\subset \Gamma_{v,\,\epsilon}=\{(t,x)\in \mathbb{R}^{2};\,\langle(t,x),v\rangle\leq\epsilon\}.
\end{align*}
So we can apply Carleman estimate \eqref{equ3.1} to $u_{\epsilon,\,R}$ and get
\begin{align}\label{equ3.4}
\left\|e^{\lambda\langle(t,x),v\rangle}u_{\epsilon,\,R}\right\|_{L^{p'}(\Gamma_{v,\,\epsilon})}\leq C\left\|e^{\lambda\langle(t,x),v\rangle}(i\partial_t-D_x^{2m})u_{\epsilon,\,R}\right\|_{L^p(\Gamma_{v,\,\epsilon})},\quad\lambda\in\mathbb{R}.
\end{align}
Recall $u\in W^p$, then the Mihlin multiplier theorem (e.g. \cite{gra}) implies $D_x^ju\in L^p(\mathbb{R}^2)$ for $1<j<2m$. If we consider $\lambda>0$, since
$$
e^{\lambda\langle(t,x),v\rangle}\left|(i\partial_t-D_x^{2m})u_{\epsilon,\,R}-\phi_R(i\partial_t-D_x^{2m})u_{\epsilon}\right|\leq \frac{Ce^{\lambda\epsilon}}{R}\sum_{j=0}^{2m-1}|D_x^ju_\epsilon|,\quad(t,x)\in\Gamma_{v,\,\epsilon},
$$
we may let $R\rightarrow \infty$ in \eqref{equ3.4} to have
\begin{align}\label{equ3.5}
\left\|e^{\lambda\langle(t,x),v\rangle}u_{\epsilon}\right\|_{L^{p'}(\Gamma_{v,\,\epsilon})}\leq C\left\|e^{\lambda\langle(t,x),v\rangle}(i\partial_t-D_x^{2m})u_{\epsilon}\right\|_{L^p(\Gamma_{v,\,\epsilon})},\quad\lambda>0.
\end{align}
The boundedness of $e^{\lambda\langle(t,x),v\rangle}$ in $\Gamma_{v,\,\epsilon}$ and the regularity of $u$ also guarantee the convergence of \eqref{equ3.5} when $\epsilon\rightarrow0$. We then use equation \eqref{eq1.1} and the vanishing of $u$ to have
\begin{equation*}
\begin{split}
&\left\|e^{\lambda\langle(t,x),v\rangle}u\right\|_{L^{p'}(S_{v,\,\rho})}\\
\leq&C\left\|e^{\lambda\langle(t,x),v\rangle}(i\partial_t-D_x^{2m})u\right\|_{L^p(S_{v,\,\rho})}+C\left\|e^{\lambda\langle(t,x),v\rangle}(i\partial_t-D_x^{2m})u\right\|_{L^p(\langle(t,x),v\rangle\leq-\rho)}\\
\leq&C\|V\|_{L^\frac{2m+1}{2m}(S_{v,\,\rho})}\left\|e^{\lambda\langle(t,x),v\rangle}u\right\|_{L^{p'}(S_{v,\,\rho})}+Ce^{-\lambda\rho}\left\|(i\partial_t-D_x^{2m})u\right\|_{L^p(\mathbb{R}^{2})},
\end{split}
\end{equation*}
where the last line comes from the fact that $\frac 1p=\frac{2m}{2m+1}+\frac{1}{p'}$. If $\rho>0$ is so small that the first term on the last line above is absorbed into the left hand side, we have
\begin{align}\label{equ3.7}
\left\|e^{\lambda(\langle(t,x),v\rangle+\rho)}u\right\|_{L^{p'}(S_{v,\,\rho})}\leq C\left\|(i\partial_t-D_x^{2m})u\right\|_{L^p(\mathbb{R}^{2})}.
\end{align}
By shrinking the left hand side of \eqref{equ3.7} to integration on $S_{v,\,\rho-\epsilon}$ for any small $\epsilon>0$ where $e^{\lambda(\langle(t,x),v\rangle+\rho)}$ is lower bounded by $e^{\lambda\epsilon}$, we can send $\lambda\rightarrow+\infty$ to obtain $u=0$ in the strip $S_{v,\,\rho}$, which completes the proof.
\end{proof}


\subsection{"Uniform resolvent estimate"}\ 

\begin{proof}[Proof of Claim \ref{cl1.7}]
Replace $f$ by $e^{-it\mathrm{Re}\,z}f(t,x)$, the proof of \eqref{eq310} is reduced to the special case $z=i\beta$ for $\beta\in\mathbb{R}\setminus\{0\}$. Let
\begin{equation*}
(Tf)(t,x)=\int_{\mathbb{R}^{2}}\frac{e^{i\langle(t,x),(\tau,\xi)\rangle}\hat{f}(\tau,\xi)}{\tau+P(\xi)+i\beta}d\tau d\xi.
\end{equation*}
If $\tilde{f}(t,\cdot)$ is the Fourier transform of $f$ in the spatial variable, then
\begin{equation*}
(Tf)(t,x)=\int_{\mathbb{R}}e^{ix\xi}\int_\mathbb{R}e^{-isP(\xi)}a(s)\tilde{f}(t-s,\xi)dsd\xi,
\end{equation*}
where $a(s)=\int_\mathbb{R}\frac{e^{i\tau s}}{\tau+i\beta}d\tau=-2\pi iH(-\beta s)e^{\beta s}$ and $H$ is the Heaviside function. Clearly $\|a\|_{L^\infty}\leq 2\pi$. Now since $P(\xi)$ is real and has degree $2m$, we can apply the van der Corput lemma (see e.g. \cite[p. 332]{St}) to have the following estimate in the sense of oscillatory integral
\begin{equation}\label{eq313.1}
\left|\int_{\mathbb{R}}e^{-isP(\xi)+ix\xi}d\xi\right|\leq C|s|^{-\frac{1}{2m}},\quad s\in \mathbb{R}\setminus \{0\},~x\in\mathbb{R}.
\end{equation}
This and the fact that $\|e^{-isP(D_x)}\|_{L^2(\mathbb{R})-L^2(\mathbb{R})}=1$ imply the interpolation
\begin{equation}\label{eq314}
\left\|e^{-isP(D_x)}\right\|_{L^p(\mathbb{R})-L^{p'}(\mathbb{R})}\leq C |s|^{-\frac{1}{2m}(\frac 1p-\frac{1}{p'})},\quad s\in\mathbb{R}\setminus\{0\}.
\end{equation}
Thus we use \eqref{eq314} and  Minkowski's inequality to obtain
\begin{equation}\label{eq313}
\begin{split}
\|(Tf)(t,x)\|_{L_x^{p'}(\mathbb{R})}\leq&\int_\mathbb{R}\left\|\int_{\mathbb{R}}e^{ix\xi-isP(\xi)}\tilde{f}(t-s,\xi)d\xi\right\|_{L_x^{p'}(\mathbb{R})}ds\\
\leq&C\int_\mathbb{R}|s|^{-\frac{1}{2m}(\frac1p-\frac{1}{p'})}\|f(t-s,x)\|_{L_x^p(\mathbb{R})}ds,
\end{split}
\end{equation}
and \eqref{eq310} follows by applying the Hardy-Littlewood-Sobolev inequality to \eqref{eq313} using $1+\frac{1}{p'}=\frac{1}{p}+\frac{1}{2m}(\frac{1}{p}-\frac{1}{p'})$.
\end{proof}

\begin{remark}\label{rk3.1}
We have dropped a hint in the Introduction that a possible higher dimensional Carleman estimate in our context should be some generalization of the restriction estimate \eqref{e127}, and a natural form is
\begin{equation*}
\left\|e^{\lambda\langle(t,x),v\rangle}u\right\|_{L^\frac{4m+2n}{n}(\mathbb{R}^{1+n})}\leq C\left\|e^{\lambda\langle(t,x), v\rangle}(i\partial_t-(-\Delta_x)^m)u\right\|_{L^\frac{4m+2n}{4m+n}(\mathbb{R}^{1+n})},\quad u\in C_c^\infty(\mathbb{R}^{1+n}).
\end{equation*}
Following the same sketch, and using the rotation symmetry of $\Delta_x$, one might first be encountering the following uniform resolvent estimate
\begin{equation}\label{e37}
\left\|\mathscr{F}^{-1}\left(\frac{\hat{f}(\tau,\xi)}{\tau+P(\xi)+z}\right)\right\|_{L^\frac{4m+2n}{n}(\mathbb{R}^{1+n})}\leq C\|f\|_{L^\frac{4m+2n}{4m+n}(\mathbb{R}^{1+n})},\quad f\in C_c^\infty(\mathbb{R}^{1+n}),\,z\in\mathbb{C}\setminus\mathbb{R},
\end{equation}
and here $P(\xi)=\mathrm{Re}\,|\xi+ie_1|^{2m}=\mathrm{Re}\,(|\xi|^2-1+2i\xi_1)^m$ for example. If we roll the proof of Claim \ref{cl1.7} in this case, to coherently close the proof of \eqref{e37}, the analogue for the first estimate \eqref{eq313.1} should be
\begin{equation}\label{e38}
\left|\int_{\mathbb{R}^n}e^{-isP(\xi)+ix\cdot\xi}d\xi\right|\leq C|s|^{-\frac{n}{2m}},\quad s\in \mathbb{R}\setminus \{0\},~x\in\mathbb{R}^n.
\end{equation}

However, since \eqref{e38} is in the form of 'sharp dispersive estimate' typically for dispersive equations with non-degenerate Hamiltonian of order $2m$, \eqref{e38} is then rarely true. This is because the phase $P(\xi)$, whose leading term is $|\xi|^{2m}$, has many lower degree terms that are varying signs, which generically implies a degenerate situation. As already seen, such degeneracy is not a problem when $n=1$ by the van der Corput lemma. We also mention that for $P(\xi)$ with positive lower order terms, \eqref{e38} is true in many cases, see e.g. Huang et al. \cite{HHZ}.
\end{remark}


\subsection{$L^p$ Carleman estimate}\ 

As explained in the Introduction, to prove Lemma \ref{lem3.1}, we are left to show \eqref{eq39} by frequency localization using Claim \ref{cl1.7}. First recall \eqref{e130}, i.e. $M_b(\tau,\xi)=(\tau+P(\xi)+iQ_b(\xi))^{-1}$ where $P(\xi)=\mathrm{Re}\,(\xi+i)^{2m}$ and $Q_b(\xi)=\mathrm{Im}\,(\xi+i)^{2m}+b$. The real polynomials $P(\xi)$ and $Q_b(\xi)$ are of degrees $2m$ and $2m-1$ respectively, and we write
\begin{equation*}\label{eq319}
Q_b(\xi)=2m\prod_{j=1}^{2m-1}(\xi-\xi_{b,j}),\quad\xi\in\mathbb{R},~b\in\mathbb{R},
\end{equation*}
for some $\xi_{b,\,j}\in\mathbb{C}$, $j=1,\dots,2m-1$. Denoted by $a_j=\mathrm{Re}\,\xi_{b,j}$, we assume without loss of generality that
\begin{equation}\label{eq3.13}
a_1\leq\cdots\leq a_{2m-1}.
\end{equation}

\begin{proof}[Proof of Lemma \ref{lem3.1}]
For convenience, we divide the argument into 3 steps.

\noindent\textbf{Step 1. Frequency localization.}

Let $\chi_0$ be the characteristic function of interval $(a_1-\frac{|a_1|}{2},a_{2m-1}+\frac{|a_{2m-1}|}{2}]$, $\chi^+$ be the characteristic function of $(1,2]$, and $\chi^-$ be the characteristic function of $(-2,-1]$. Define
\begin{equation*}
\chi_k^+(\xi)=\chi^+\left(\frac{\xi-a_{2m-1}}{2^{k-2}|a_{2m-1}|}\right),\quad\chi_k^-(\xi)=\chi^-\left(\frac{\xi-a_1}{2^{k-2}|a_1|}\right),\quad k\geq1,
\end{equation*}
we have $\chi_0+\Sigma_{k\geq1}\chi_k^++\Sigma_{k\geq1}\chi_k^-\equiv1$. As earlier observed in \cite[p. 336]{KRS} for the case of second order hyperbolic operators, we conclude that \eqref{eq39} is a consequence of the following localized estimates: there is some constant $C>0$ independent of $k$ and $b$ such that
\begin{align}\label{eq315}
\left\|\mathscr{F}^{-1}(\chi_0(\xi)M_b(\tau, \xi)\hat{f}(\tau,\xi))\right\|_{L^{p'}(\mathbb{R}^{2})}\leq C\|\chi_0(D_x)f\|_{L^p(\mathbb{R}^{2})},
\end{align}
and
\begin{align}\label{eq315.b}
\left\|\mathscr{F}^{-1}(\chi_k^\pm(\xi)M_b(\tau, \xi)\hat{f}(\tau,\xi))\right\|_{L^{p'}(\mathbb{R}^{2})}\leq C\|\chi_k^\pm(D_x)f\|_{L^p(\mathbb{R}^{2})},\quad k\geq1.
\end{align}

By the one dimensional Littlewood-Paley theorem associated with non-smooth dyadic sums (see \cite[p. 349]{gra}), Minkowski's inequality with the fact that $p<2<p'$, \eqref{eq315} and \eqref{eq315.b}, we obtain
\begin{equation}\label{eq315.c}
\begin{split}
&\left\|\mathscr{F}^{-1}(M_b(\tau, \xi)\hat{f}(\tau,\xi))\right\|_{L^{p'}(\mathbb{R}^{2})}\\
\leq&\left\|\mathscr{F}^{-1}(\chi_0(\xi)M_b(\tau, \xi)\hat{f}(\tau,\xi))\right\|_{L^{p'}(\mathbb{R}^{2})}+C\left\|\left(\sum_{k=1}^{\infty}\left|\mathscr{F}^{-1}(\chi_k^+(\xi)M_b(\tau, \xi)\hat{f}(\tau,\xi))\right|^2\right)^{\frac12}\right\|_{L^{p'}(\mathbb{R}^{2})}\\
&+C\left\|\left(\sum_{k=1}^{\infty}\left|\mathscr{F}^{-1}(\chi_k^-(\xi)M_b(\tau, \xi)\hat{f}(\tau,\xi))\right|^2\right)^{\frac12}\right\|_{L^{p'}(\mathbb{R}^{2})}\\
\leq& C\left(\|\chi_0(D_x)f\|_{L^p(\mathbb{R}^{2})}+\left(\sum_{k=1}^{\infty}\|\chi_k^+(D_x)f\|^2_{L^p(\mathbb{R}^2)}\right)^{\frac12}+\left(\sum_{k=1}^{\infty}\|\chi_k^-(D_x)f\|^2_{L^p(\mathbb{R}^2)}\right)^{\frac12}\right)\\
\leq& C\left(\|\chi_0(D_x)f\|_{L^p(\mathbb{R}^{2})}+\left\|\left(\sum_{k=1}^{\infty}|\chi_k^+(D_x)f|^{2}\right)^{\frac12}\right\|_{L^p(\mathbb{R}^2)}+\left\|\left(\sum_{k=1}^{\infty}|\chi_k^-(D_x)f|^{2}\right)^{\frac12}\right\|_{L^p(\mathbb{R}^2)}\right)\\
\leq& C\|f\|_{L^p(\mathbb{R}^{2})}.\\
\end{split}
\end{equation}

We remark that $\chi_k^\pm$ has translation and scaling factors depending on $b$, but such operations for frequency cut-offs in the Littlewood-Paley theorem leave the same equivalence constants, and thus the constant $C$ in \eqref{eq315.c} is universal.

Now we are left to prove \eqref{eq315} and \eqref{eq315.b}.

\noindent\textbf{Step 2. Proof of \eqref{eq315.b}.} 

We only prove in the "+" case. By Claim \ref{cl1.7}, there is a constant $C>0$ independent of $k$ and $b$ such that
\begin{equation*}\label{eq316}
\left\|\mathscr{F}^{-1}\left(\frac{\chi_k^+(\xi)\hat{f}(\tau,\xi)}{\tau+P(\xi)+iQ_b(a_{2m-1,\,k})}\right)\right\|_{L^{p'}(\mathbb{R}^2)}\leq C\|\chi_k^+(D_x)f\|_{L^p(\mathbb{R}^2)},\quad k\geq1,
\end{equation*}
where $a_{2m-1,\,k}=a_{2m-1}+2^{k-2}|a_{2m-1}|$. This and the difference
\begin{equation*}\label{eq317}
\begin{split}
&\frac{1}{\tau+(\xi+i)^{2m}+ib}-\frac{1}{\tau+P(\xi)+iQ_b(a_{2m-1,\,k})}\\
=&\frac{i(Q_b(a_{2m-1,\,k})-Q_b(\xi))}{(\tau+P(\xi)+iQ_b(\xi))(\tau+P(\xi)+iQ_b(a_{2m-1,\,k}))},
\end{split}
\end{equation*}
indicate that \eqref{eq315.b} follows if one can prove the uniform estimates
\begin{equation*}\label{eq318}
\left\|\mathscr{F}^{-1}\left(\frac{(Q_b(a_{2m-1,\,k})-Q_b(\xi))\chi_k^+(\xi)\hat{f}(\tau,\xi)}{(\tau+P(\xi)+Q_b(\xi))(\tau+P(\xi)+iQ_b(a_{2m-1,\,k}))}\right)\right\|_{L^{p'}(\mathbb{R}^2)}\leq C\|\chi_k^+(D_x)f\|_{L^p(\mathbb{R}^2)},
\end{equation*}
for $k\geq1$ and $b\in\mathbb{R}$. For such purpose, we write
\begin{align}\label{eq320}
&\mathscr{F}^{-1}\left(\frac{(Q_b(a_{2m-1,\,k})-Q_b(\xi))\chi_k^+(\xi)\hat{f}(\tau,\xi)}{(\tau+P(\xi)+Q_b(\xi))(\tau+P(\xi)+iQ_b(a_{2m-1,\,k}))}\right)(t,x)\nonumber\\
=&\int_{\mathbb{R}}\int_{\mathbb{R}}\int_{\mathbb{R}}\frac{(2\pi)^{-1}e^{i((t-s)\tau+x\xi)}(Q_b(a_{2m-1,\,k})-Q_b(\xi))\chi_k^+(\xi)\tilde{f}(s,\xi)}{(\tau+P(\xi)+Q_b(\xi))(\tau+P(\xi)+iQ_b(a_{2m-1,\,k}))}dsd\xi d\tau\nonumber\\
=&\int_{\mathbb{R}}e^{it\tau}d\tau\int_\mathbb{R}e^{-is\tau}ds\int_{\mathbb{R}}\frac{(2\pi)^{-1}e^{ix\xi}(Q_b(a_{2m-1,\,k})-Q_b(\xi))\chi_k^+(\xi)\tilde{f}(s,\xi)}{(\tau+P(\xi)+Q_b(\xi))(\tau+P(\xi)+iQ_b(a_{2m-1,\,k}))}d\xi\nonumber\\
=&\int_{\mathbb{R}}e^{it\rho}d\rho\int_\mathbb{R}e^{-is\rho}ds\int_{\mathbb{R}}\frac{(2\pi)^{-1}e^{-i(t-s)P(\xi)}e^{ix\xi}(Q_b(a_{2m-1,\,k})-Q_b(\xi))\chi_k^+(\xi)\tilde{f}(s,\xi)}{(\rho+iQ_b(\xi))(\rho+iQ_b(a_{2m-1,\,k}))}d\xi\nonumber\\
=&\int_{\mathbb{R}}e^{it\rho}d\rho\int_\mathbb{R}e^{-is\rho}\left(e^{-i(t-s)P(D_x)}F_{b,\,k,\,\rho}(s,\cdot)\right)(x)ds,
\end{align}
where in the third equality we change the variable $\rho=\tau+P(\xi)$, and in the last equality $F_{b,\,k,\,\rho}(s,\cdot)$ is the inverse spatial Fourier transform of
\begin{equation}\label{eq321}
\tilde{F}_{b,\,k,\,\rho}(s,\xi)=\frac{(Q_b(a_{2m-1,\,k})-Q_b(\xi))\chi_k^+(\xi)\tilde{f}(s,\xi)}{(\rho+iQ_b(\xi))(\rho+iQ_b(a_{2m-1,\,k}))}.
\end{equation}
Notice that the decay estimate \eqref{eq314} and the Hardy-Littlewood-Sobolev inequality imply the following Strichartz type estimate
\begin{align*}\label{eq322}
\left\|\int_{\mathbb{R}}e^{-is\rho}\left(e^{-i(t-s)P(D_x)}F_{b,\,k,\,\rho}(s,\cdot)\right)(x)ds\right\|_{L_{t,x}^{p'}(\mathbb{R}^2)}\leq C\|F_{b,\,k,\,\rho}\|_{L^p(\mathbb{R}^2)},
\end{align*}
it then follows from Minkowski's inequality that
\begin{equation}\label{eq323}
\left\|\int_{\mathbb{R}}e^{it\rho}d\rho\int_\mathbb{R}e^{-is\rho}\left(e^{-i(t-s)P(D_x)}F_{b,\,k,\,\rho}(s,\cdot)\right)(x)ds\right\|_{L_{t,x}^{p'}(\mathbb{R}^2)}\leq C\int_{\mathbb{R}}\|F_{b,\,k,\,\rho}\|_{L^p(\mathbb{R}^2)}d\rho.
\end{equation}

Now we are left to study the $L^p-L^p$ bound associated with the Fourier multiplier in \eqref{eq321}. Since $\mathrm{supp}\,\chi_k^+\subset[a_{2m-1}+2^{k-2}|a_{2m-1}|,a_{2m-1}+2^{k-1}|a_{2m-1}|]$, if we take $\phi^+\in C_c^\infty((\frac12,\frac52))$ such that $0\leq\phi^+\leq1$ and $\phi^+\equiv1$ on $[1,2]$, denoted by
\begin{equation}\label{eq3.31}
\phi_k^+(\xi)=\phi^+\left(\frac{\xi-a_{2m-1}}{2^{k-2}|a_{2m-1}|}\right),
\end{equation}
we have $\phi_k^+\equiv1$ on $\mathrm{supp}\,\chi_k^+$. Then it suffices to show the $L^p-L^p$ bound associated with
\begin{equation*}
M_{b,\,k,\,\rho}^+(\xi)=\frac{(Q_b(a_{2m-1,\,k})-Q_b(\xi))\phi_k^+(\xi)}{(\rho+iQ_b(\xi))(\rho+iQ_b(a_{2m-1,\,k}))}.
\end{equation*}

When $\xi\in\mathrm{supp}\,M_{b,\,k,\,\rho}^+\subset[a_{2m-1}+2^{k-3}|a_{2m-1}|,a_{2m-1}+5\cdot2^{k-3}|a_{2m-1}|]$, recall \eqref{eq3.13}, we have for $1\leq j\leq2m-1$ and $k\geq1$ that
\begin{equation*}
\frac12\leq\frac{\xi-a_j}{a_{2m-1,\,k}-a_j}=\frac{\xi-a_j}{a_{2m-1}+2^{k-2}|a_{2m-1}|-a_j}\leq\frac52,
\end{equation*}
and consequently
\begin{equation}\label{eq3.34}
\frac12\leq\left|\frac{\xi-\xi_{b,\,j}}{a_{2m-1,\,k}-\xi_{b,\,j}}\right|=\sqrt{\frac{(\xi-a_j)^2+(\mathrm{Im}\,\xi_{b,\,j})^2}{(a_{2m-1,\,k}-a_j)^2+(\mathrm{Im}\,\xi_{b,\,j})^2}}\leq\frac52.
\end{equation}
Here we have used an elementary fact: if $x,\,y,\,z>0$, $C_1,\,C_2\geq1$, and $C_1^{-1}\leq\frac xy\leq C_2$, then $C_1^{-1}\leq\frac{x+z}{y+z}\leq C_2$. Now \eqref{eq3.34} implies
\begin{equation}\label{eq3.35}
C^{-1}|Q_b(a_{2m-1,\,k})|\leq|Q_b(\xi)|\leq C|Q_b(a_{2m-1,\,k})|,\quad\xi\in\mathrm{supp}\,M_{b,\,k,\,\rho}^+,
\end{equation}
where $C=(\frac52)^{2m-1}$. Also notice that when $\xi\in\mathrm{supp}\,M_{b,\,k,\,\rho}^+$ we have
\begin{equation*}
\frac{|\xi|}{a_{2m-1,\,k}-a_j}\leq\frac{|\xi|}{2^{k-2}|a_{2m-1}|}\leq\frac{|a_{2m-1}|+5\cdot2^{k-3}|a_{2m-1}|}{2^{k-2}|a_{2m-1}|}\leq\frac92,
\end{equation*}
which implies
\begin{equation}\label{eq3.37}
\begin{cases}
|\xi\partial_\xi Q_b(\xi)|\leq C|Q_b(a_{2m-1,\,k})|,\\
|\xi\partial_\xi\phi_k^{+}(\xi)|\leq C,
\end{cases}
\quad\xi\in\mathrm{supp}\,M_{b,\,k,\,\rho}^+.
\end{equation}
Thus by \eqref{eq3.35} and \eqref{eq3.37} we have
\begin{equation*}
|M_{b,\,k,\,\rho}^+(\xi)|+|\xi\partial_\xi M_{b,\,k,\,\rho}^+|\leq\frac{C|Q_b(a_{2m-1,\,k})|}{\rho^2+|Q_b(a_{2m-1,\,k})|^2}.
\end{equation*}
Then we apply the Mihlin multiplier theorem to obtain
\begin{equation}\label{eq3.39}
\|F_{b,\,k,\,\rho}\|_{L^p(\mathbb{R}^2)}\leq\frac{C|Q_b(a_{2m-1,\,k})|}{\rho^2+|Q_b(a_{2m-1,\,k})|^2}\|\chi_k^+(D_x)f\|_{L^p(\mathbb{R}^2)},
\end{equation}
and thus
\begin{equation}\label{eq327}
\begin{split}
\int_{\mathbb{R}}\|F_{b,\ k,\ \rho}\|_{L^p(\mathbb{R}^2)}d\rho\leq&C\|\chi_k^+(D_x)f\|_{L^p(\mathbb{R}^2)}\int_{\mathbb{R}}\frac{|Q_b(a_{2m-1,\ k})|}{\rho^2+|Q_b(a_{2m-1,\ k})|^2}d\rho\\
\leq&C\|\chi_k^+(D_x)f\|_{L^p(\mathbb{R}^2)}.
\end{split}
\end{equation}
In view of \eqref{eq320}, \eqref{eq323} and \eqref{eq327}, we have finished the proof of \eqref{eq315.b}.

\noindent\textbf{3. Proof of \eqref{eq315}.}\ 

The arguments are similar to those in Step 2. Let $\chi_{0,\,\nu}$ be the characteristic function of $(\frac{a_{\nu-1}+a_\nu}{2},\frac{a_\nu+a_{\nu+1}}{2}]$ for $1\leq\nu\leq2m-1$, where $a_0=a_1-|a_1|$ and $a_{2m}=a_{2m-1}+|a_{2m-1}|$, we have $\chi_0=\sum_{\nu=1}^{2m-1}\chi_{0,\,\nu}$. (If $a_\nu=a_{\nu-1}=a_{\nu+1}$, we define $\chi_{0,\nu}\equiv0$.) With $\chi^\pm$ defined at the beginning of Step 1, we also set
\begin{equation*}
\chi_{0,\,\nu,\,k}^+(\xi)=\chi^+\left(\frac{\xi-a_\nu}{2^{k-1}(a_{\nu+1}-a_\nu)}\right),\quad\chi_{0,\,\nu,\,k}^-(\xi)=\chi^-\left(\frac{\xi-a_\nu}{2^{k-1}(a_\nu-a_{\nu-1})}\right),\quad k\leq-1,
\end{equation*}
whenever $a_{\nu+1}>a_\nu$ or $a_\nu>a_{\nu-1}$. They are supported in $\mathrm{supp}\,\chi_{0,\,\nu}$, and
\begin{equation*}
\sum_{k=-\infty}^{-1}\left(\chi_{0,\,\nu,\,k}^+(\xi)+\chi_{0,\,\nu,\,k}^-(\xi)\right)=1,\quad\xi\in\mathrm{supp}\,\chi_{0,\,\nu}\setminus\{a_\nu\}.
\end{equation*}
With an argument similar to Step 1, we only have to focus on proving
\begin{equation}\label{eq3.43}
\left\|\mathscr{F}^{-1}(\chi_{0,\,\nu,\,k}^+(\xi)M_b(\tau,\xi)\hat{f}(\tau,\xi))\right\|_{L^{p'}(\mathbb{R}^2)}\leq C\|\chi_{0,\,\nu,\,k}^+(D_x)f\|_{L^p(\mathbb{R}^2)},
\end{equation}
where $\chi_{0,\,\nu,\,k}^+$ is non-trivial and $C>0$ is uniform in $\nu,\,k,\,b$.

Denoted by
\begin{equation*}
a_{0,\,\nu,\,k}=5\cdot 2^{k-2}(a_{\nu+1}-a_\nu),\quad Q_{b,\,\nu}(\xi)=Q_b(\xi+a_\nu)=2m\prod_{j=1}^{2m-1}(\xi+a_\nu-\xi_{b,\,j}),
\end{equation*}
and notice that $Q_{b,\,\nu}(a_{0,\,\nu,\,k})\neq0$, by Claim \ref{cl1.7} we have
\begin{equation*}
\left\|\mathscr{F}^{-1}\left(\frac{\chi_{0,\,\nu,\,k}^+(\xi)\hat{f}(\tau,\xi)}{\tau+P(\xi)+iQ_{b,\,\nu}(a_{0,\,\nu,\,k})}\right)\right\|_{L^{p'}(\mathbb{R}^2)}\leq C\|\chi_{0,\,\nu,\,k}^+(D_x)f\|_{L^p(\mathbb{R}^2)}.
\end{equation*}
By the same arguments in Step 2, it suffices to consider the $L^p-L^p$ bound associated with the Fourier multiplier
\begin{equation*}
M_{b,\,\nu,\,k,\,\rho}^+(\xi)=\frac{(Q_{b,\,\nu}(a_{0,\,\nu,\,k})-Q_b(\xi))\phi_{0,\,\nu,\,k}^+(\xi)}{(\rho+iQ_b(\xi))(\rho+iQ_{b,\,\nu}(a_{0,\,\nu,\,k}))},
\end{equation*}
where similar to \eqref{eq3.31} we are here using
\begin{equation*}
\phi_{0,\,\nu,\,k}^+(\xi)=\phi^+\left(\frac{\xi-a_\nu}{2^{k-1}(a_{\nu+1}-a_\nu)}\right).
\end{equation*}
Since $L^p-L^p$ bound is translation invariant for Fourier multiplier, we may instead consider $\tilde{M}_{b,\,\nu,\,k,\,\rho}^+(\xi)=M_{b,\,\nu,\,k,\,\rho}^+(\xi+a_\nu)$.

When $\xi\in\mathrm{supp}\,\tilde{M}_{b,\,\nu,\,k,\,\rho}^+\subset[2^{k-2}(a_{\nu+1}-a_\nu),5\cdot2^{k-2}(a_{\nu+1}-a_\nu)]$ and $k\leq-1$, one checks that
\begin{equation*}
\begin{cases}
1\leq\frac{\xi+a_\nu-a_j}{a_{0,\,\nu,\,k}+a_\nu-a_j}\leq\frac73,\quad \nu<j\leq2m-1,\\
\frac15\leq\frac{\xi+a_\nu-a_j}{a_{0,\,\nu,\,k}+a_\nu-a_j}\leq1,\quad 1\leq j\leq\nu,
\end{cases}
\end{equation*}
and
\begin{equation*}
\begin{cases}
\frac{|\xi|}{|a_{0,\,\nu,\,k}+a_\nu-a_j|}\leq\frac53,\quad1\leq j\leq2m-1,\\
\frac{|\xi|}{2^{k-1}(a_{\nu+1}-a_\nu)}\leq\frac52.
\end{cases}
\end{equation*}
A discussion parallel to \eqref{eq3.34}-\eqref{eq3.37} leads to
\begin{equation*}
|\tilde{M}_{b,\,\nu,\,k,\,\rho}^+(\xi)|+|\xi\partial_\xi\tilde{M}_{b,\,\nu,\,k,\,\rho}^+|\leq\frac{C|Q_{b,\,\nu}(a_{0,\,\nu,\,k})|}{\rho^2+|Q_{b,\,\nu}(a_{0,\,\nu,\,k})|^2}.
\end{equation*}
In the view of \eqref{eq3.39} and \eqref{eq327}, we have proved \eqref{eq3.43} and thus \eqref{eq315}. 

Now \eqref{eq39} is shown, and the proof of Lemma \ref{lem3.1} is complete.
\end{proof}

\noindent
\section*{Acknowledgments}
T. Huang is supported by the China Postdoctoral Science Foundation No. 2020M672929, and the Guangdong Basic and Applied Basic Research Foundation No. 2020A1515111048. S. Huang is supported by the National Natural Science Foundation of China No. 11801188 and No. 11971188. Q. Zheng is supported by the National Natural Science Foundation of China No. 11801188.

\end{document}